\renewcommand{\d}{\mathrm{d}}
\renewcommand{\L}{\mathcal{L}}
\newcommand{\what}[1]{\widehat{#1}}
\renewcommand{\d}{\mathrm{d}}
\newcommand{\dbar}{\bar{\partial}}
\newcommand{\eps}{\varepsilon}
\newcommand{\op}{\operatorname}
\newcommand{\CO}{{\mathcal{O}}}
\newcommand{\CL}{\mathcal{L}}
\newcommand{\C}{{\mathbb{C}}}
\newcommand{\R}{{\mathbb{R}}}
\newcommand{\Z}{{\mathbb{Z}}}
\newcommand{\E}{\mathcal{E}}
\newcommand{\CN}{\mathcal{N}}
\newcommand{\CM}{\mathcal{M}}
\newcommand{\g}{\mathfrak{g}}
\newcommand{\h}{\mathfrak{h}}
\newcommand{\F}{\mathcal{F}}
\DeclareMathOperator{\GL}{GL}
\DeclareMathOperator{\Spin}{Spin}
\DeclareMathOperator{\SU}{SU}
\DeclareMathOperator{\Tr}{Tr}
\DeclareMathOperator{\Obs}{Obs}
\DeclareMathOperator{\Sym}{Sym}
\newcommand{\ad}{\mathrm{ad}}
\newcommand{\cinfty}{C^{\infty}}
\newcommand{\id}{\operatorname{id}}
\newcommand*{\longhookrightarrow}{\ensuremath{\lhook\joinrel\relbar\joinrel\rightarrow}}
\newcommand*\strip{
\begin{tikzpicture}[scale=0.6]
\draw (0,0) -- (1, 0);
\draw (0,0.4) -- (1, 0.4);
\fill[pattern=north east lines] (0,0) rectangle (1,0.4);
\end{tikzpicture}
}
\newtheorem{thm}{Theorem}[section]
\newtheorem*{thm*}{Theorem}
\newtheorem{prop}[thm]{Proposition}
\newtheorem*{prop*}{Proposition}
\newtheorem{lemma}[thm]{Lemma}
\newtheorem{conjecture}[thm]{Conjecture}
\theoremstyle{definition}
\newtheorem{defn}[thm]{Definition}
\newtheorem*{defn*}{Definition}
\newtheorem{ex}[thm]{Example}
\newtheorem{rem}[thm]{Remark}
\theoremstyle{rem}
\newtheorem*{remark}{Remark}
\title{Lectures on mathematical aspects of (twisted) supersymmetric gauge theories}
\author{Kevin Costello}
\thanks{Kevin Costello
\newline Department of Mathematics,
Northwestern University, 2033 Sheridan Road,
Evanston, IL, 60208-2730, USA
{\tt costello@math.northwestern.edu}}
\author{Claudia Scheimbauer}
\thanks{Claudia Scheimbauer
\newline Department of Mathematics,
ETH Z\"urich, R\"amistrasse 101,
CH-8092 Z\"urich,
Switzerland,
{\tt scheimbauer@math.ethz.ch}
}
\begin{document}

\begin{abstract}

Supersymmetric gauge theories have played a central role in applications of quantum field theory to mathematics. Topologically twisted supersymmetric gauge theories often admit a rigorous mathematical description: for example, the Donaldson invariants of a 4-manifold can be interpreted as the correlation functions of a topologically twisted $\CN=2$ gauge theory. 
The aim of these lectures is to describe a mathematical formulation of partially-twisted supersymmetric gauge theories (in perturbation theory). These partially twisted theories are intermediate in complexity between the physical theory and the topologically twisted theories. Moreover, we will sketch how the operators of such a theory form a two complex dimensional analog of a vertex algebra. Finally, we will consider a deformation of the $\CN=1$ theory and discuss its relation to the Yangian, as explained in \cite{Cos13} and \cite{Cos13b}. 
\end{abstract}

\maketitle

\vspace{1em}
These are lecture notes of a minicourse given by the first author at the Winter school in Mathematical Physics 2012 in Les Houches on minimal (or holomorphic) twists of supersymmetric gauge theories, and will appear in the proceedings of this conference. 

Supersymmetric gauge theories in general are very difficult to study, whereas topologically twisted supersymmetric gauge theories have been well-studied. Our object of interest lies somewhere in between:

\begin{center}
{Supersymmetric gauge theories}\\\vspace{0.5em}
\rotatebox[origin=c]{90}{$\subseteq$}\\\vspace{0.5em}
\fbox{\bf Minimal (or holomorphic) twists of supersymmetric gauge theories}\\\vspace{0.5em}
\rotatebox[origin=c]{90}{$\subseteq$}\\\vspace{0.5em}
Topologically twisted supersymmetric gauge theories (e.g.\ Donaldson theory)
\end{center}

In the first section, we recall basics of supersymmetry. We define and describe holomorphic twists of $\CN=1,2,4$ supersymmetric theories. In the second section, we discuss the structure of the observables of field theories. In the last section, we examine the structure of the observables of twisted SUSY gauge theories more closely and explain a relation to vertex algebras. Moreover, we consider a deformation of the $\CN=1$ theory and discuss its relation to the Yangian and (conjecturally) to the quantum loop algebra. In a short appendix, we briefly summarize the framework set up in \cite{Cos11b} relating perturbative field theories, moduli problems, and elliptic $L_\infty$-algebras.

{\em Acknowledgements:} We are grateful to R.~Grady for his careful reading of the paper and useful comments. Moreover, the second author is very thankful to Damien Calaque for many helpful conversations. K.C.~is partially supported by NSF grant DMS 1007168, by a Sloan Fellowship, and by a Simons Fellowship in Mathematics. C.S.~is supported by a grant from the Swiss National Science Foundation (project number 200021\textunderscore137778). 


\section{Basics of Supersymmetry}\label{SUSY}
In these lectures, we consider gauge theories on $\R^4$. Everything in this first section is essentially standard, a reference for this material is \cite{Cos11b,DelFre99,Fre99}.

\subsection{Super-translation Lie algebra and supersymmetric field theories}

Recall that there is an isomorphism of groups
$$\Spin(4)\cong\SU(2)\times\SU(2).$$
Let $S^+$ and $S^-$ be the fundamental representations of the two $\SU(2)$'s. More precisely, referring to the two copies of $\SU(2)$ in $\Spin(4)$ as $\SU(2)_{\pm}$, let $S^+$ be the 2-dimensional complex fundamental representation of $\SU(2)_+$ endowed with trivial $\SU(2)_-$ action. Thus, $S^+$ is a 2-dimensional complex representation of $\Spin(4)$, and similarly, so is $S^-$.

Let $V_{\R}=\R^4$ and $V_{\C}=V_{\R}\otimes\C$.
Then $V_{\R}=\R^4$ is the defining 4-dimensional real representation of $\mathrm{SO}(4)$ and
$$V_{\C}\cong S^+\otimes S^-$$
as complex $\Spin(4)$ representations.

\begin{defn}
The {\em super-translation Lie algebra} $T^{\CN=1}$ is the complex $\Z/2\Z$-graded Lie algebra\footnote{Here $\Pi C$ means that the vector space $C$ has odd degree. So $T^{\CN=1}$ consists of $V_{\C}$ in degree 0 and $S^+\oplus S^-$ in degree 1.}
$$T^{\CN=1}=V_{\C}\oplus \Pi(S^+\oplus S^-),$$
where the Lie bracket is defined by $[Q^+, Q^-]=Q^+\otimes Q^-\in V_{\C}$ for $Q^+\in S^+, Q^-\in S^-$, and is zero otherwise.
\end{defn}

To encode more supersymmetry, we extend this definition to the following.
\begin{defn}
Let $W$ be a complex vector space. Define the {\em super-translation Lie algebra} $T^{W}$ to be the complex $\Z/2\Z$-graded Lie algebra
$$T^{W}=V_{\C}\oplus \Pi(S^+\otimes W\oplus S^-\otimes W^*),$$
where the Lie bracket is defined by $[Q^+\otimes w, Q^-\otimes w^*]=(Q^+\otimes Q^-)\langle w,w^*\rangle\in V_{\C}$ for $Q^+\otimes w\in S^+\otimes W,$ and $Q^-\otimes w^*\in S^+\otimes W^*$. The {\em number of supersymmetries} is the dimension of $W$. For $W=\C^k$ we use the notation
$$T^{\CN=k}=T^{\C^k}.$$
\end{defn}
Note that $\Spin(4)$ acts on $T^W$. 
\begin{defn}
A {\em supersymmetric (SUSY) field theory\footnote{For simplicity, we omit formal definitions here. See the appendix or \cite{Cos11b} for more details.} on $\R^4$} is a field theory on $\R^4 = V_\R$, equivariant under the action of $\op{Spin}(4) \ltimes V_\R$ on $\R^4$, and where the action of the Lie algebra $V_{\R}$ of translations is extended to an action of the Lie algebra of super-translations, in a way compatible with the $\op{Spin}(4)$-action. 
\end{defn}

Observe that $\GL(W)$ acts on $T^W$ naturally. If $G_R\subseteq\GL(W)$, one can ask that a SUSY field theory has a compatible action of $G_R$ and $T^W$.  In physics parlance, $G_R$ is the {\em R-symmetry group} of the theory.

\subsection{Twisting}

The general yoga of deformation theory \cite{KonSoi,Hin01,Lur11} tells us that symmetries\footnote{In order for this relationship to be a bijection, the word ``symmetry'' needs to be understood homotopically: e.g. by considering symmetries of a free resolution of an algebraic object.} of any mathematical object of cohomological degree 1 correspond to first order deformations.   More generally, symmetries of degree $k$ give first-order deformations over the base ring $\C[\eps]/\eps^2$, where $\eps$ is of degree $1-k$.  
The idea is the following.  Suppose we're dealing with a differential-graded mathematical object, such as a differential graded algebra $A$ with differential $d$.  A symmetry of $A$ of degree $k$ is a derivation $X$ of $A$ of degree $k$.  The corresponding deformation is given by changing the differential to $\d + \eps X$, where as above $\eps$ has degree $1-k$ and we work modulo $\eps^2$.  

Suppose that we have a supersymmetric field theory, acted on by the supersymmetry Lie algebra $T^W$.  Let us pick an odd element $Q \in T^W$.  In the supersymmetric world, things are bi-graded, by $\Z$ and $\Z/2$. We have both a cohomological degree and a ``super'' degree.  The symmetry $Q$ of our theory is of bidegree $(0,1)$; i.e. it is of cohomological degree $0$ and super degree $1$.  Thus, $Q$ will define a deformation of this theory over the base ring $\C[t]/t^2$, where the parameter $t$ is of bidegree $(1,1)$ (and thus even). 

Concretely, this deformation of our theory is obtained by adding $t Q$ to the BRST differential of the theory.  For example, if the theory is described by a factorization algebra (as we will discuss later), we're adding $t Q$ to the differential of the factorization algebra.

In general, first order deformations (corresponding to symmetries of degree $1$) extend to all-order deformations if they satisfy the Maurer-Cartan equation. In the example of a differential graded algebra described above, a derivation $X$ of $A$ of degree $1$ satisfies the Maurer-Cartan equation if 
$$
d X + \tfrac{1}{2} [X,X] = 0.
$$
This implies that the differential $d + \eps X$ has square zero, where we're working over the base ring $\C[\![\eps]\!]$.

The Lie algebra $T^W$ has zero differential, so that the Maurer-Cartan equation for an odd element $Q \in T^W$ is the equation $[Q,Q] = 0$.  Therefore, if $Q$ satisfies this equation, then it gives rise to a deformation of our theory over the base ring $\C[\![t]\!]$, where again $t$ is of bidegree $(1,1)$.  The twisted theory will be constructed from this deformation.
 
However, now we see that there's a problem: we would like our twisted theory to be a single $\Z \times \Z/2$-graded theory, not a family of theories over $\C[\![t]\!]$ where $t$ has bidegree $(1,1)$.  (The fact that $t$ has this bidegree means that, even if we could set $t = 1$, the resulting theory would not be $\Z \times \Z/2$-graded.)

To resolve this difficulty, we use a $\C^\times$ action to change the grading.
\begin{defn} {\em Twisting data} for a supersymmetric field theory consists of an odd element $Q \in T^W$
and a group homomorphism $\rho:\C^\times \to G_R$ such that
$$\rho(\lambda)(Q)=\lambda Q \quad \forall \lambda\in\C^\times$$
and such that $[Q,Q] = 0$.
\end{defn}
Suppose we have such twisting data, and that we have a theory acted on by $T^W$ with $R$-symmetry group $G_R$.  Then we can, as above, form a family of theories over $\C[\![t]\!]$ by adding $t Q$ to the BRST differential.  We can now, however, use the action of $\C^\times$ on everything to change the grading.  Indeed, this $\C^\times$ action lifts the bi-grading by $\Z \times \Z/2$ to a tri-grading by $\Z \times \Z \times \Z/2$, where the first $\Z$ is the weight under the $\C^\times$-action.  Since $Q$ has weight $1$ under this $\C^\times$ action, $t$ has weight $-1$ and so tri-degree $(-1,1,1)$.  

From this tri-grading we construct a new $\Z \times \Z/2$ grading, by declaring that an element with tri-degree $(a,b,c)$ has new bi-degree $(b+a,c+a)$.  This change of grading respects signs. 

After this change of grading, we see that we have a family of theories over $\C[\![t]\!]$ where $t$ is now of bi-degree $(0,0)$ i.e. it is of cohomological degree $0$ and super degree $0$.  We still have the $\C^\times$-action, and this acts on $t$ by sending $t \to \lambda^{-1} t$.   Therefore this family of theories is independent of $t$, and we can set $t = 1$. 

Thus, our twisting data defines a {\em twisted field theory} with BRST operator $d+Q$ if $d$ is the original BRST operator. For details on the construction, see section 13 in \cite{Cos11b}.

\begin{rem}
Our twisted field theory is a $\C^\times$-equivariant family of theories over $\C$ with BRST operator $d+tQ$. By the Rees construction, this is the same as the data of a filtration on the twisted field theory, whose associated graded is the untwisted theory with a shift of grading. 
It follows that there is a spectral sequence from the cohomology of the observables of the untwisted theory to that of the twisted theory.

One might think that the cohomology of observables of the twisted theory (in the sense above) is a subset of the cohomology of observables of the untwisted theory, because one is looking at the $Q$-closed modulo $Q$-exact observables of the original theory.  This is not really true, however, because this fails to take account of the differential (the BRST operator) on the observables of the untwisted theory. The best that one can say in general is that there is a spectral sequence relating twisted and untwisted observables.   

There are examples (obtained by applying further twists to theories which are already partially twisted) where this spectral sequence degenerates, so that the cohomology of twisted observables has a filtration whose associated graded is the cohomology of untwisted observables. In such cases, twisted and untwisted observables are the ``same size'', and twisted observables are definitely not a subset of untwisted observables (at the level of cohomology). 
\end{rem}

\subsection{Minimally twisted $\CN=1,2,4$ SUSY theories are holomorphic}

We begin with the case $\CN=1$. Choosing an element $Q\in S^+$ is the same as choosing a complex structure on the linear space $\R^4$, with the property that the standard Riemannian metric on $\R^4$ is K\"ahler for this complex structure and that the induced orientation on $\R^4$ is the standard one.  (Elements in $S^-$ give rise to such complex structures which induce the opposite orientation on $\R^4$).

One can see this as follows. Given $Q \in S^+$, the stabilizer $\mathrm{Stab}(Q)\subseteq\Spin(4)\cong\SU(2)\times\SU(2)$ is $\SU(2)_-$, so $Q$ provides a reduction of the structure group to $\SU(2)$. Concretely,  $Q\otimes S^-\subseteq V_{\C}=\C^4$ is the $(0,1)$ part, i.e.~the $-i$ eigenspace of the complex structure, and its complex conjugate is the $(1,0)$ part.

The complexified $R$-symmetry group for $\CN=1$ supersymmetry is $\C^\times$, which acts on supercharges in $S^+$ with weight $1$ and in $S^-$ with weight $-1$.  As we explained earlier, we will use this $R$-symmetry action to change gradings, so that supercharges in $S^+$ have cohomological degree $1$ and those in $S^-$ have cohomological degree $-1$.  
 
After we change the grading in this way, the $\Z$-graded version of the super-translation Lie algebra
$$T^{\CN=1} = S^-[1] \oplus V_{\C} \oplus S^+[-1]$$
acts on the untwisted theory.  If we twist by an element $Q \in S^+$, then the dg Lie algebra $(T^{\CN=1}, [Q,-])$ acts on the $Q$-twisted theory. 

Let $\frac{\partial}{\partial z_i}, \frac{\partial}{\partial \bar{z_i}}$ denote a basis for $V_{\C}$ where we are using the complex structure on $V_\R$ induced by $Q$.  Then the map
$$[Q,-]:S^-\to V_{\C}$$
has image $[Q,S^-]=Q\otimes S^-=V_{\C}^{(0,1)}$, which is the subspace generated by the $\frac{\partial}{\partial \bar{z_i}}$'s. Thus, translations in the $\frac{\partial}{\partial \bar{z_i}}$ directions are homotopically trivial in the twisted theory.

This means that the twisted theory is \emph{holomorphic}.  Let us briefly explain this idea. Recall that the energy-momentum tensor of a field theory arises from the action of the translation group $V_\R$ on the field theory.  One (quite weak) way to say that a field theory is topological is that the energy-momentum tensor is trivial.  This implies, for instance, that correlation functions are independent of position.  Our definition of holomorphic is that the action of $V_\C^{(0,1)}$ is (homotopically) trivial. This will mean that correlation functions are holomorphic functions of position. 

In fact, for $\CN=1,2,4$, any twist by a $Q$ of the form $Q^+\otimes w\in S^+\otimes W$ (a decomposable tensor) produces a holomorphic field theory. Twists by such elements are called {\em minimal twists}.

Examples of such a minimally twisted supersymmetric gauge theory can be obtained by twisting the anti-self-dual $\CN=1,2,4$ supersymmetric gauge theories\footnote{We will refer to these as ``the $\CN=1,2,4$ twisted SUSY gauge theory'' in the rest of these notes.} on $\R^4$. In fact, these twisted field theories arise as cotangent theories, which means that the space of solutions to the equations of motion is described as a $-1$-shifted cotangent bundle:
\begin{description}
\item[$\CN=1$] \hspace{1cm} $T^*[-1](\mbox{holomorphic $G$-bundles})$
\item[$\CN=2$] \hspace{1cm} $T^*[-1](\mbox{holomorphic $G$-bundles + $\psi\in H^0(\g_{P})$})$
\item[$\CN=4$] \hspace{1cm} $T^*[-1](\mbox{holomorphic $G$-bundles + $\psi_1,\psi_2\in H^0(\g_{P})$ s.t.~$[\psi_1,\psi_2]=0$})$
\end{description}
If we work perturbatively (as we do for most of this note), we consider solutions to the equations of motion which lie in a formal neighbourhood of a given solution.  It is possible to glue together the perturbative descriptions over the moduli space of classical solutions, but we do not consider this point in this note.  Here $G$ is a semi-simple algebraic group. We denote by $\g_{P} = P\times_G \g$ the adjoint bundle of Lie algebras associated to $P$.

They admit an explicit description, as derived in \cite{Cos11b}. The fields of these theories can be described in the BV formalism as follows:
\begin{description}
\item[$\CN=1$] The fields are
$$\Omega^{0,\ast}(\C^2, \g)[1]\oplus \Omega^{2,\ast}(\C^2, \g^\vee),$$
and the action on the space of fields is given by
$$\int_{\C^2} \Tr(\beta\wedge (\bar\partial\alpha+\frac12[\alpha,\alpha])),$$
where $\alpha\in\Omega^{0,\ast}(\C^2, \g)[1]$ and $\beta\in\Omega^{2,\ast}(\C^2, \g^\vee)$. This theory is a {\em holomorphic BF theory}, see \cite{Cos13}. It is equivalent to holomorphic Chern-Simons theory on the supermanifold $\C^{2 \mid 1}$. 

\item[$\CN=2$] We get something similar, replacing $\g$ by $\g[\eps]$, where $\eps$ is a square-zero parameter of degree 1.  Thus, the field $\alpha$ is an element $\Omega^{0,\ast}(\C^2,\g[\eps])[1]$ and the field $\beta$ is an element of $\Omega^{2,\ast}(\C^2, (\g[\eps])^\vee)$.  

\item[$\CN=4$] Again, we get something similar, replacing $\g$ by $\g[\eps_1,\eps_2]$, where $\eps_1,\eps_2$ are square-zero parameters of degrees $1$ and $-1$ respectively.  This is equivalent to holomorphic Chern-Simons theory on $\C^{2 \mid 3}$. 
\end{description}

This result from \cite{Cos11b} allows an explicit calculation (at the classical level) of the spaces of observables of these supersymmetric gauge theories.  (We will discuss the structure on observables using the language of factorization algebras shortly).  For instance, for the $\CN=4$ theory, the space of observables supported at the origin in $\C^2$ is 
$$
C^\ast(\g[\![z_1,z_2,\eps_1,\eps_2,\eps_3]\!])
$$
where the $\eps_i$ are three odd parameters. This result was also derived in \cite{ChaYin13}, using different methods.

\section{Factorization algebras in perturbative quantum field theory}

In the book \cite{Cos11}, a definition of a quantum field theory based on Wilsonian effective action and the BV formalism is given. The main result is that we can construct, using renormalization, such perturbative quantum field theories starting from a classical field theory and working term by term in $\hbar$, using obstruction theory.  Let $\E$ be the space of fields of a classical field theory and let $\CO(\E)$ be the functionals on $\E$. If we have a quantization modulo $\hbar$, there may be an obstruction $O_n\in H^1(\CO_{loc}(\E))$ to quantize to the next order. Here $\CO_{loc}(\E)$ denotes the subcomplex of $\CO(\E)$ consisting of local functionals, i.e.~functionals which can be written as sums of integrals over differential operators. If $O_n$ vanishes, we can quantize to the next order, and the possible lifts are a torsor for $H^0(\CO_{loc}(\E))$.
\subsection{Factorization algebras}
In \cite{CosGwi11}, Costello and Gwilliam analyze the structure of observables of a quantum field theory in the language of factorization algebras.  The notion of factorization algebra was introduced in the algebro-geometric context by Beilinson and Drinfeld in \cite{BeiDri04}. The approach used by Costello-Gwilliam is very similar to how observables and the operator product are encoded in Segal's axioms for quantum field theory \cite{Seg99a,Seg04,Seg10}.

\begin{defn}
Let $M$ be a topological space and let $\mathcal{C}$ be a symmetric monoidal category (in examples from field theory $\mathcal{C}$ will be cochain complexes or some variant) . A {\em prefactorization algebra} $\F$ on $M$ (with values in $\mathcal{C}$) consists of the following data.
\begin{enumerate}
\item For every open subset $U\subseteq M$, an object $\F(U)\in \mathrm{Ob}(\mathcal{C})$.
\item If $U_1,\ldots, U_n$ are pairwise disjoint open subsets of an open set $V$, we have a morphism
\begin{center}
 \begin{minipage}{3cm}
 \begin{tikzpicture}[scale=0.25]
 \draw (0,0) circle (5);
 \draw (-1.5,2) circle(1.3) node {$U_1$};
 \draw (-2.2,-1.5) circle (1.5) node {$U_2$};
 \draw (0, -2.5) node {\tiny$\dots$};
 \draw (2.1,-1) circle (1.8) node {$U_n$};
 \draw (2.1, 3) node {$V$};
 \end{tikzpicture}
 \end{minipage}
\hspace{0.5cm} $\rightsquigarrow$ \hspace{0.5cm}
 \begin{minipage}{8cm}
 $$\F(U_1)\otimes\dots\otimes\F(U_n)\longrightarrow\F(V),$$
 \end{minipage}
\end{center}
\end{enumerate}
such that if $U_1\amalg\cdots\amalg U_{n_i}\subseteq V_i$ and $V_1\amalg\cdots\amalg V_k\subseteq W$, the following diagram commutes.
\begin{center}
\hspace{-1cm}
\begin{minipage}{3cm}
{\tiny
\begin{center}
\begin{tikzpicture}[scale=0.3]
\draw (0,0) circle (5);
\draw (2.4, 3.3) node {$W$};
\draw [style=loosely dashed] (-1.7,1.5) circle(2.1);
\draw (-2.2, 2.7) node {$V_1$};
\draw [style=loosely dashed] (2.2,-1.2) circle (2.3);
\draw (3.1, 0) node {$V_2$};
\draw (-2.2, 0.8) circle (0.9) node {$U_1$};
\draw (-0.7, 1.8) circle (0.6) node {$U_2$};
\draw (1.3,-0.5) circle (0.7) node {$U_3$};
\draw (2.8, -2.1) circle (1) node {$U_4$};
\end{tikzpicture}
(for $k=n_1=n_2=2$)
\end{center}
}
 \end{minipage}
 \hspace{0.2cm} $\rightsquigarrow$ \hspace{0.2cm}
\begin{minipage}{8cm}
\begin{tikzcd}[column sep=small]
{\bigotimes}^{k}_{i=1}{\bigotimes}^{n_i}_{j=1}\F(U_j) \arrow{dr} \arrow{rr} &	&{\bigotimes}^k_{i=1}\F(V_i) \arrow{dl}\\
&\F(W)	& 
\end{tikzcd}
\end{minipage}
\end{center}

A {\em factorization algebra} on $M$ is a prefactorization algebra on $M$ which additionally satisfies a gluing condition saying that given an open cover $\{U_i\}$ of $V$ satisfying certain conditions, $\F(V)$ can be recovered from the $\F(U_i)$'s.This glueing condition is analogous to the one for (homotopy) (co-)sheaves.

For the exact gluing condition and more details on the theory of factorization algebras we refer to \cite{CosGwi11} and to Gr\'egory Ginot's notes \cite{Gin13}.
\end{defn}
Although the definition makes sense for an arbitrary topological space, we will only consider factorization algebras on manifolds.

\subsection{Associative algebras are factorization algebras.} 
Actually, associative algebras are a special case of factorization algebras with values in chain complexes. Suppose that we have a factorization algebra $\F$ on $\R$ with the property that  for any interval $(a,b)\subseteq\R$ the map
$$\F((a,b))\overset{\simeq}{\longrightarrow}\F(\R)$$
is a quasi-isomorphism, i.e.~an isomorphism on cohomology\footnote{Such a factorization algebra is called {\em locally constant}.}. Then $\F$ defines an associative algebra (up to homotopy). Let $A=\F(\R)\simeq\F((a,b))$ for any $(a,b)\subseteq\R$. If $(a,b)\amalg (c,d)\subseteq (e,f)$ with $e<a<b<c<d<f$, the factorization algebra structure gives us a map
\begin{center}
\hspace{-3.5cm}\begin{minipage}{12em}
\begin{center}
\begin{tikzpicture}[scale=0.5]
\draw (-5,0) -- (5,0);
\draw (3,0) arc (0:30:0.5);
\draw (3,0) arc (0:-30:0.5);
\draw[gray] (4,0) arc (0:30:0.5);
\draw[gray] (4,0) arc (0:-30:0.5);
\draw (-1,0) arc (0:30:0.5);
\draw (-1,0) arc (0:-30:0.5);

\draw (1,0) arc (0:30:-0.5);
\draw (1,0) arc (0:-30:-0.5);
\draw (-3,0) arc (0:30:-0.5);
\draw (-3,0) arc (0:-30:-0.5);
\draw[gray] (-4,0) arc (0:30:-0.5);
\draw[gray] (-4,0) arc (0:-30:-0.5);

\draw (-3,0.2) node [anchor=south] {$a$};
\draw (-1,0.2) node [anchor=south] {$b$};
\draw (1,0.2) node [anchor=south] {$c$};
\draw (3,0.2) node [anchor=south] {$d$};
\draw[gray] (-4,0.2) node [anchor=south] {$e$};
\draw[gray] (4,0.2) node [anchor=south] {$f$};

\draw (0,-1.5) node {\rotatebox[origin=c]{-90}{$\hookrightarrow$}};

\draw[gray] (-5,-3) -- (5,-3);
\draw[gray] (3,-3) arc (0:30:0.5);
\draw[gray] (3,-3) arc (0:-30:0.5);
\draw (4,-3) arc (0:30:0.5);
\draw (4,-3) arc (0:-30:0.5);
\draw[gray] (-1,-3) arc (0:30:0.5);
\draw[gray] (-1,-3) arc (0:-30:0.5);

\draw[gray] (1,-3) arc (0:30:-0.5);
\draw[gray] (1,-3) arc (0:-30:-0.5);
\draw[gray] (-3,-3) arc (0:30:-0.5);
\draw[gray] (-3,-3) arc (0:-30:-0.5);
\draw (-4,-3) arc (0:30:-0.5);
\draw (-4,-3) arc (0:-30:-0.5);

\draw[gray] (-3,-2.8) node [anchor=south] {$a$};
\draw[gray] (-1,-2.8) node [anchor=south] {$b$};
\draw[gray] (1,-2.8) node [anchor=south] {$c$};
\draw[gray] (3,-2.8) node [anchor=south] {$d$};
\draw (-4,-2.8) node [anchor=south] {$e$};
\draw (4,-2.8) node [anchor=south] {$f$};

\end{tikzpicture}
\end{center}

\end{minipage}
\hspace{1cm} $\rightsquigarrow$\hspace{1cm}
\begin{minipage}{0.2\linewidth}    

\begin{equation*}
\begin{array}{ccccc}
\F((a,b))&\otimes&\F((c,d))&\longrightarrow&\F((e,f))\\
\rotatebox[origin=c]{-90}{$\simeq$}&&\rotatebox[origin=c]{-90}{$\simeq$}&&\rotatebox[origin=c]{-90}{$\simeq$}\\
A&\otimes&A&\overset{m}{\longrightarrow}&A
\end{array}
\end{equation*}
      
\end{minipage}
\end{center}
Conversely, any associative algebra defines a locally constant factorization algebra on $\R$.

\begin{rem}
Such factorization algebras really appear in quantum mechanics. Quantum mechanics is the field theory with fields $\phi \in C^\infty_c(\R)$ and action functional $S(\phi)=\int\phi\Delta\phi$, $\phi\in C^{\infty}_c(\R)$. Then the equations of motion say that $\phi$ is harmonic, i.e.~$\Delta \phi = 0$.  Harmonic functions on $(a,b)$ extend uniquely to harmonic functions on $\R$: this implies that the factorization algebra constructed from this example has the property that the map $\F((a,b)) \to \F(\R)$ is an isomorphism. This example will be explained in more detail in \ref{free field ex}.
\end{rem}

\subsection{The factorization algebra of observables}\label{observables}
It is shown in \cite{CosGwi11} that observables of a (perturbative) quantum field theory in Euclidean signature turn out to have the structure of a factorization algebra with values in the category of cochain complexes of $\C[\![\hbar]\!]$-modules, flat over $\C[\![\hbar]\!]$.  These cochain complexes are built from spaces of smooth functions and distributions on the space-time manifold. Technically, these cochain complexes are endowed with a ``diffeological'' structure, which is something a little weaker than a topology; this reflects their analytical origin.

Observables of a classical field theory also form a factorization algebra. Starting from the quantum observables, the classical observables are
 $$\Obs^{cl}(V)\coloneq\Obs^q(V)/\hbar=\left\{\begin{array}{c}\mbox{functions on the ``derived'' moduli space of}\\ \mbox{solutions to the Euler-Lagrange equations on }V\end{array}\right\}.$$
Taking the derived space of solutions to the Euler-Lagrange equations amounts to one version of the BV classical formalism.  The antifields of the BV formalism correspond to taking the Koszul complex associated to the equations of motion and the ghosts correspond to taking the quotient by the gauge group in a homological way. For more details, see \cite{Cos11}.

The factorization algebra of quantum observables deforms that of classical observables, in that quantum observables are a factorization algebra over $\C[\![\hbar]\!]$ and restrict to classical observables modulo $\hbar$.  To first order, this deformation is closely related to the BV antibracket on the classical observables.

We should emphasize that quantum observables, for a general quantum field theory in Euclidean signature,  \emph{do not} form an associative algebra\footnote{Note that we work in Euclidean signature.  Some axiom systems in Lorentzian signature have an asssociative structure on observables: see Klaus Fredenhagen's lectures in the same volume.}.  Associative algebras arise when one studies factorization algebras on the real line (associated to $1$-dimensional quantum field theories).  The associative product is the operator product of observables in the time direction. For a factorization algebra on a higher-dimensional manifold, there is no specified ``time'' direction which allows one to define the associative product, rather there is a kind of ``product'' for every direction in space-time.  

Furthermore, quantum field theories in dimension larger than one rarely satisfy the locally-constant condition which was satisfied by the observables of quantum mechanics.  The exception to this rule is the observables of a topological field theory.  In this case, however, we find that observables from an $E_n$-algebra (a structure studied by topologists to encode the product in an $n$-fold loop space) rather than simply an associative algebra.  This is a result of Lurie \cite{Lur12}, who shows that there is an equivalence between locally-constant factorization algebras on $\R^n$ and $E_n$-algebras. 

Another point to emphasize is that factorization algebras are only the right language to capture the structure of observables of a QFT in Euclidean signature. In Lorentzian signature, the operator product (at least for massless theories) has singularities on the light-cone and not just on the diagonal, so that we would only expect to be able to define the factorization product for pairs of open subsets which are not just disjoint, but  which can not be connected by a path in the light cone. 

\subsubsection*{More structures on the factorization algebra of observables.}
\hspace{1em}

{\em Translation invariance:} If we additionally have translation invariance on a locally constant factorization algebra on $\R$, we get an associative algebra endowed with an infinitesimal automorphism, i.e.~a derivation. This derivation encodes the Hamiltonian of the field theory. 

{\em Poisson bracket:} The classical observables $\Obs^{cl}(U)$ form a commutative dg algebra. Moreover, we have a Poisson bracket of cohomological degree one\footnote{This means that $\Obs^{cl}$ has the structure of a {\em $P_0$ factorization algebra}, where $P_0$ is the operad describing commutative dg algebras with a Poisson bracket of degree $1$.}, the ``antibracket'' $\{\,,\}$ on $\Obs^{cl}(U)$.

{\em (Weak) Quantization condition:} In deformation quantization, the non-commutative algebra structure to first order must be related to the Poisson bracket. We have a similar condition\footnote{We present here a weak version of the condition.  A stronger version, discussed in \cite{CosGwi11}, is that $\Obs^q$ is a {\em $BD$ factorization algebra}, where $BD$ is the Beilinson-Drinfeld operad. The $BD$ operad is an operad over $\C[\![\hbar]\!]$ deforming the $P_0$ operad:  $BD\otimes_{\C[\![\hbar]\!]}\C \simeq P_0$.} relating the factorization algebras of quantum and classical observables.

The differential $\d$ on $\Obs^q(U)$ should satisfy
\begin{enumerate}
\item Modulo $\hbar$, $\d$ coincides with the differential $\d_0$ on $\Obs^{cl}(U)$.
\item Let 
$$
\d_1 : H^i (\Obs^{cl}(U)) \to H^{i+1}(\Obs^{cl}(U)) 
$$
be the boundary map coming from the exact sequence of complexes
$$
\hbar \Obs^{cl}(U) \longrightarrow \Obs^q (U) \mod \hbar^2 \longrightarrow \Obs^{cl}(U).
$$ 
$\d_1$ lifts to a cochain map of degree $1$ $\Obs^{cl}(U) \to \Obs^{cl}(U)$, which we continue to call $\d_1$.   Then, if we define a bilinear map on $\Obs^{cl}(U)$ by
$$
\{a,b\}^{\d_1} = \d_1(ab) \mp a \d_1 b - (d_1 a) b
$$
we ask that there is a homotopy between $\{a,b\}^{\d_1}$ and the original bracket $\{a,b\}$.  (In particular, these two brackets must coincide at the level of cohomology). 
\end{enumerate}

\subsection{Example: the free scalar field}\label{free field ex}

Let $M$ be a compact Riemannian manifold.  We will consider the field theory where the fields are $\phi \in C^{\infty}(M)$, and the action functional is 
$$
S(\phi) = \int_M \phi \Delta \phi,
$$
where $\Delta$ is the Laplacian on $M$.

\subsubsection{Classical observables}

If $U \subseteq M$ is an open subset, then the space of solutions of the equations of motion on $U$ is the space of harmonic functions on $U$,
$$\{\phi\in C^{\infty}(U)|\Delta\phi=0\}.$$

As discussed above, we consider the \emph{derived} space of solutions of the equations of motion. This is a linear dg manifold, i.e.~a cochain complex. For more details about the derived philosophy, the reader should consult \cite{CosGwi11}. In this simple situation, the derived space of solutions to the free field equations, on an open subset $U \subseteq M$, is the two-term complex
$$\E(U) = \left( C^{\infty}(U) \overset{\Delta}{\longrightarrow} C^{\infty}(U)[-1]\right).$$

The classical observables of a field theory on an open subset $U \subseteq M$ should be functions on the derived space of solutions to the equations of motion on $U$, and thus the symmetric algebra of the dual.\footnote{For free theories, it is enough to consider polynomial functions.} The dual to the two-term complex $\E(U)$ above is the complex
$$
\E_c^\vee(U) = \left( \mathcal{D}_c(U)[1] \overset{\Delta}{\longrightarrow} \mathcal{D}_c(U) \right),
$$ 
where $\mathcal{D}_c(U)$ indicates the space of compactly supported distributions on $U$.

We would like to define $\Obs^{cl}=\CO(\E)=\Sym(\E^\vee)$, but in order to define a Poisson structure on $\Obs^{cl}$, instead we need to use a version of elliptic regularity, which we call the Atiyah-Bott lemma \cite{AtiBot67}. Let
$$\E_c^!(U)=(\cinfty_c(U)[1]\longrightarrow \cinfty_c(U)).$$
The Atiyah-Bott lemma states that the map of cochain complexes
$$\E_c^!(U) \longrightarrow \E_c^\vee(U),$$
given by viewing a compactly supported function as a distribution, is a continuous homotopy equivalence.

Thus, we define our classical observables to be
$$
\Obs^{cl}(U) = {\Sym} \left( \E_c^!(U)\right) = \oplus_n \Sym^n \E_c^!(U).
$$
By $\Sym^n \E_c^!(U)$ we mean the $S_n$-invariants in the complex of compactly supported sections of the bundle $(E^!)^{\boxtimes n}$ on $U^n$. Equivalently, we can view $\Sym^n \E_c^!(U)$ as the symmetric product of the topological vector space $\E_c^!(U)$ using the completed inductive (or bornological) tensor product\footnote{These tensor products both have the property that $\cinfty_c(M) \what{\otimes} \cinfty_c(N) = \cinfty_c(M \times N)$, and similarly for compactly supported smooth sections of a vector bundle on $M$.  The more familiar projective tensor product does not (at least not obviously) have this property. See \cite{Gro52} for a discussion of the inductive tensor product and \cite{KriMic97} for the bornological tensor product.  The reader with no taste for functional analysis should just take the fact that $\E_c(U)^{\otimes n} = \Gamma_c(U, E^{\boxtimes n})$ as a definition of $\E_c(U)^{\otimes n}$.}.

It is clear that classical observables form a prefactorization algebra.  Indeed, $\Obs^{cl}(U)$ is a differential graded commutative algebra.  If $U \subseteq V$, there is a natural algebra homomorphism 
$$i^{U}_V : \Obs^{cl}(U) \to \Obs^{cl}(V),$$ which on generators is just the natural map $C^{\infty}_c(U) \to C^{\infty}_c(V)$ given by extending a continuous compactly supported function on $U$ by zero on $V\setminus U$.

If $U_1,\dots, U_n \subseteq V$ are disjoint open subsets, the prefactorization structure map is the continuous multilinear map
\begin{align*}
\Obs^{cl}(U_1) \times \dots \times \Obs^{cl}(U_n) & \to \Obs^{cl}(V) \\
\alpha_1 \times \dots \times \alpha_n & \mapsto \sum_{i=1}^n i^{U_i}_V \alpha_i \in \Obs^{cl}(V).  
\end{align*}

In dimension one, this is particularly simple.
\begin{lemma}\cite{CosGwi11} 
If $U = (a,b) \subset \R$ is an interval in $\R$, then
\begin{enumerate}
\item For any $x\in(a,b)$, the complex
$$\E_c^!((a,b))=\left(C^{\infty}_c((a,b))[1] \overset{\Delta}{\rightarrow} C^{\infty}_c((a,b))\right)$$
is quasi-isomorphic to $\R^2$ situated in degree 0, i.e.~the cohomology is
$$H^\ast\left(\E_c^!((a,b))\right) = \R^2.$$

\item The algebra of classical observables for the free field has cohomology
$$
H^\ast\left(\Obs^{cl}((a,b))\right) = \R[p,q],
$$ 
the free algebra on two variables.
\end{enumerate}
\end{lemma}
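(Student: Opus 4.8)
The statement splits into the one-dimensional computation (1) and the formal consequence (2), so the real content is part (1) and the only delicate point is functional-analytic.

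\emph{Part (1).} The complex $\E_c^!((a,b))$ is $\cinfty_c((a,b))$ placed in cohomological degrees $-1$ and $0$ with differential the second-order operator $\Delta$, for which $\Delta f=0$ precisely when $f$ is affine. In degree $-1$ the cohomology is $\ker\big(\Delta\colon\cinfty_c((a,b))\to\cinfty_c((a,b))\big)$, and an affine function compactly supported in $(a,b)$ is zero, so $H^{-1}=0$. For $H^0$ I would compute the cokernel of $\Delta$ directly: given $g\in\cinfty_c((a,b))$, the equation $\Delta f=g$ together with $f\equiv 0$ near $a$ forces both integration constants to vanish and produces the explicit antiderivative $f(x)=-\int_a^x\!\int_a^s g(t)\,\d t\,\d s$; this $f$ lies in $\cinfty_c((a,b))$ iff it and its derivative vanish near $b$, i.e.\ iff
$$
\int_a^b g(t)\,\d t = 0 \qquad\text{and}\qquad \int_a^b t\,g(t)\,\d t = 0
$$
(the second condition comes from $\int_a^b\!\int_a^s g\,\d t\,\d s=\int_a^b(b-t)g(t)\,\d t$ together with the first). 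Since these two moments take any prescribed values on a suitable bump function, $g\mapsto\big(\int g,\ \int t\,g\big)$ identifies $H^0\big(\E_c^!((a,b))\big)$ with $\R^2$. A point $x\in(a,b)$ singles out the basis of this $\R^2$ dual to $\phi\mapsto\phi(x)$ and $\phi\mapsto\phi'(x)$ on affine $\phi$; any $x$ yields a basis of the same space — that is the sense in which the identification holds "for any $x$" — and these represent the two linear observables called $q$ (position) and $p$ (momentum).

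This computation moreover upgrades to a genuine homotopy equivalence, which is what part (2) will need: it says $\Delta$ is injective on $\cinfty_c((a,b))$ with image the common kernel of the two moment functionals, and the explicit double antiderivative is a continuous right inverse of $\Delta$ on that image. Choosing $p,q\in\cinfty_c((a,b))$ whose moment pairs $(\int,\ \int t\cdot\,)$ are $(0,1)$ and $(1,0)$, one gets a topological direct sum $\cinfty_c((a,b))=\langle p,q\rangle\oplus\Delta\,\cinfty_c((a,b))$, so $\E_c^!((a,b))$ splits as $\langle p,q\rangle$ in degree $0$ plus an acyclic complex, and the inclusion $\R^2\cong\langle p,q\rangle\hookrightarrow\E_c^!((a,b))$ is a continuous homotopy equivalence. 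This is the elementary one-dimensional analogue of the Atiyah--Bott lemma already invoked above.

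\emph{Part (2).} By definition $\Obs^{cl}((a,b))=\Sym\big(\E_c^!((a,b))\big)=\bigoplus_{n\ge 0}\Sym^n\big(\E_c^!((a,b))\big)$ with the differential induced by $\Delta$ and the symmetric-algebra product. Tensoring the homotopy equivalence $\iota\colon\R^2\to\E_c^!((a,b))$ of part (1) with itself $n$ times gives an $S_n$-equivariant continuous homotopy equivalence $(\R^2)^{\what\otimes n}\to\big(\E_c^!((a,b))\big)^{\what\otimes n}$ — here we use that $\what\otimes$ is a functorial additive bifunctor, which is precisely why the paper uses the inductive/bornological tensor product (cf.\ the footnote $\cinfty_c(M)\what\otimes\cinfty_c(N)=\cinfty_c(M\times N)$). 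Passing to $S_n$-invariants, which over $\R$ is exact and sends $S_n$-equivariant homotopy equivalences (after averaging the homotopy) to homotopy equivalences, we get $H^\ast\big(\Sym^n\,\E_c^!((a,b))\big)=\Sym^n(\R^2)$; summing over $n$ and remembering the product, $H^\ast\big(\Obs^{cl}((a,b))\big)=\Sym(\R^2)=\R[p,q]$, the polynomial algebra on the two generators from part (1).

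\emph{Main obstacle.} The ODE bookkeeping in part (1) is routine; the genuinely delicate point is the functional-analytic claim that the quasi-isomorphism $\E_c^!((a,b))\simeq\R^2$ is realized through \emph{continuous} maps and homotopies, so that it survives the completed tensor product and the passage to $S_n$-invariants in part (2). The explicit topological splitting $\cinfty_c((a,b))=\langle p,q\rangle\oplus\Delta\,\cinfty_c((a,b))$ above, together with the Atiyah--Bott-type input already in play, is what settles this; everything after that is formal.
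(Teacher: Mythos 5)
Your argument is correct, but it takes a genuinely different route from the paper's for part (1). The paper never computes the cohomology of the compactly supported complex directly: it first shows that the \emph{non}-compactly-supported complex $\E((a,b)) = \bigl(\cinfty((a,b)) \xrightarrow{\Delta} \cinfty((a,b))[-1]\bigr)$ is homotopy equivalent to $\R^2$ via the $1$-jet evaluation $(\phi,\psi)\mapsto(\phi(x_0),\phi'(x_0))$, with explicit homotopy inverse $(a,b)\mapsto(a+b(x-x_0),0)$ and chain homotopy given by the double antiderivative of $\psi$; it then dualizes and invokes the Atiyah--Bott elliptic regularity lemma to transport the statement to $\E_c^!((a,b))\simeq\E^\vee((a,b))\simeq\R^2$. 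You instead work on the compactly supported side from the start, computing $\ker\Delta$ and $\operatorname{coker}\Delta$ on $\cinfty_c((a,b))$ by hand: the two moment functionals $g\mapsto(\int g,\int tg)$ you find are exactly the pairings with the basis $\{1,t\}$ of harmonic functions, so your computation is literally the dual of the paper's. Each route has its advantages: the paper's makes the identification of the generators with $\delta_{x_0}$ and $\delta'_{x_0}$ (i.e.\ $q$ and $p$) immediate and reuses the Atiyah--Bott lemma, which is needed anyway to define the Poisson bracket on $\Obs^{cl}$; yours avoids elliptic regularity entirely and produces the continuous splitting $\cinfty_c((a,b))=\langle p,q\rangle\oplus\Delta\,\cinfty_c((a,b))$ directly on the complex that actually enters $\Obs^{cl}$, which is precisely the input needed for the $\Sym$ step. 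For part (2) you both use the same idea (exactness of $\Sym$ over $\R$ applied to a continuous homotopy equivalence, compatibly with the completed tensor product); your version just spells out the $S_n$-averaging that the paper leaves implicit.
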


\begin{proof}
We first show that for any $x_0\in(a,b)$, the map 
$$\begin{array}{rcll}
\E((a,b))=&\left(\cinfty((a,b)) \overset{\Delta}{\to} \cinfty((a,b))[-1]\right) & \overset{\pi}{\longrightarrow} &\R^2\\
&(\phi,\psi) & \longmapsto & (\phi(x_0), \phi'(x_0))
\end{array}$$
is a quasi-isomorphism. To show this, consider the inclusion
\begin{align*}
\R^2 &\overset{i}{\longhookrightarrow} \E((a,b))\\
(a,b) & \longmapsto (a+b(x-x_0), 0).
\end{align*}
Then $\pi\circ i = \id|_{\R^2}$ and $i\circ\pi: (\phi,\psi)\mapsto(\phi(x_0)+\phi'(x_0)(x-x_0),0)\in\E((a,b))$. Thus, a homotopy between the identity and $i\circ\pi$ is given by
$$(\phi,\psi)\longmapsto S(\phi,\psi)(x)=\left(\int_{y=a}^x\int_{u=a}^y \psi(u)\,\d u\d y,0\right),$$
as $\id-i\circ\pi=[\Delta, S]$.

This implies that the dual $\E^\vee((a,b))$ also is quasi-isomorphic to $\R^2$ and, by elliptic regularity, $\E^!_c((a,b))\simeq\E^\vee((a,b))\simeq\R^2$.

The second part follows directly from the first and the exactness of $\Sym_\R$.
\end{proof}

\begin{rem}
The quasi-isomorphism $\pi$ from the proof induces the desired quasi-isomorphism $\pi^\vee:(\R^2)^\vee\to\E^\vee$. So,
$$\begin{array}{rcccl}
\pi^\vee(1,0)(\phi,\psi)&=&\phi(x_0)&=&\delta_{x_0}(\phi)=q(\phi,\psi)\\
\pi^\vee(0,1)(\phi,\psi)&=&\phi'(x_0)&=&\delta'_{x_0}(\phi)=p(\phi,\psi),
\end{array}$$
the position and the momentum observables, respectively. Thus, the cohomology of $\E^\vee((a,b))$ is generated by $q$ and $p$.
\end{rem}

Recall that the classical observables are endowed with a Poisson bracket of degree 1. For $\alpha \in C^{\infty}_c(U)$ and $\beta \in C^{\infty}_c(U)[1]$, we have
$$
\{\alpha,\beta\} = \int_U \alpha \beta \,dVol.
$$ 
This extends uniquely to a continuous Poisson bracket on $\Obs^{cl}(U)$.

\subsubsection{Quantizing free field theories}\label{quantization}

Our philosophy is that we should take a $P_0$ factorization algebra $\Obs^{cl}$ (e.g.~the observables of a classical field theory) and deform it into a $BD$ factorization algebra $\Obs^q$. This is a strong version of the quantization condition. For a general (interacting) field theory, with the current state of technology we can only construct a weak quantization as defined in section \ref{observables}. However, in the case of a free field theory, we can show that the quantization satisfies this strong quantization condition.

Now we will construct such a quantization of the classical observables of our free field theory, i.e.~a factorization algebra $\Obs^q$ with the property that
$$\Obs^q(U)=\Obs^{cl}(U)[\hbar]$$
as $\C[\hbar]$-modules and with a differential $\d$ such that
\begin{enumerate}
\item Modulo $\hbar$, $\d$ coincides with the differential on $\Obs^{cl}(U)$,
\item $\d$ satisfies
$$\d(ab)=(\d a)b + (-1)^{|a|} a(\d b)+\hbar\{a,b\},$$
where the multiplication arises from that on $\Obs^{cl}(U)$.
\end{enumerate}

The construction (see also \cite{Gwi12}) starts with a certain graded Heisenberg Lie algebra.  Let 
$$
\mathcal{H}(U) = \left(  \cinfty_c(U) \overset{\Delta}{\longrightarrow} \cinfty_c(U)[-1] \right) \oplus \R\hbar[-1],
$$
where $\R$ is situated in degree $1$.  Let us give $\mathcal{H}(U)$ a Lie bracket by saying that, if $\alpha \in \cinfty_c(U)$ and $\beta \in \cinfty_c(U)[-1]$, then 
$$
[\alpha,\beta] = \hbar \int_U \alpha \beta.
$$

Let 
$$
\Obs^q(U) = C_{-\ast} (\mathcal{H}(U) )
$$
be the Chevalley-Eilenberg Lie algebra chain complex of $\mathcal{H}(U)$ with the grading reversed.  The tensor product that is used to define the Chevalley chain complex is, as before, the completed inductive (or bornological) tensor product of topological vector spaces.  

Thus,
\begin{align*} 
\Obs^q(U) &= ( \Sym^\ast (\mathcal{H}(U)[1]), \d )\\
&= ( \Obs^{cl}(U)[\hbar], \d ) \\
&= \left(  \oplus_n \Gamma_c(U^n, (E^!)^{\boxtimes n} )^{S_n} \right)[\hbar]
\end{align*}
where, in the last line, $E^!$ is the direct sum of the trivial vector bundles in degrees $0$ and $-1$.   

The differential $\d$ is defined by first extending the Lie bracket by
$$[\alpha,\beta\wedge\gamma]=[\alpha,\beta]\wedge\gamma +(-1)^{|\beta|(|\alpha|+1)} \beta\wedge[\alpha,\gamma],$$
and then defining
$$\d(\alpha\wedge\beta)=d\alpha\wedge\beta+(-1)^{|\alpha|}\alpha\wedge d\beta + \hbar[\alpha,\beta].$$

Thus, by definition, $\Obs^q(U)$ is a $BD$-algebra and $\Obs^q$ has the structure of a factorization algebra in $BD$-algebras by extending the natural map $\cinfty(U)\to\cinfty(V)$ by the identity to the central extension.

Finally, one can prove that our construction of the factorization algebra for a free field theory, when restricted to dimension one, reconstructs the Weyl algebra associated to quantum mechanics.

\begin{prop}
Let $\Obs^q$ denote the factorization algebra on $\R$ constructed from the free field theory, as above.  Then,
\begin{enumerate}
\item The cohomology $H^\ast(\Obs^q)$ is locally constant.
\item The corresponding associative algebra is the Weyl algebra, generated by $p,q,\hbar$ with the relation $[p,q] = \hbar$. Classically, $p$ is the observable which sends a field $\phi \in \cinfty(\R)$ to $\phi'(0)$, and $q$ sends $\phi$ to $\phi(0)$. 
\item The fact that this factorization algebra is translation invariant means that the corresponding associative algebra is equipped with a derivation which we call $H$. This derivation is given by the Lie bracket with the Hamiltonian, 
$$
H =  \tfrac{1}{2}\hbar^{-1} [p^2, \,\,\,].
$$
\end{enumerate}
\end{prop}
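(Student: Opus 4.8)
The plan is to transport everything back through the Chevalley--Eilenberg chains functor $C_\ast$ to the Heisenberg Lie algebra $\mathcal{H}$ of Section~\ref{quantization}, computing on that side and reading off the three assertions.

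\textbf{Part (1).} For any inclusion of intervals $(a,b)\subseteq(c,d)$ the extension-by-zero map $\E^!_c((a,b))\to\E^!_c((c,d))$ is a quasi-isomorphism: by the Lemma both complexes have cohomology $\R^2$ concentrated in a single degree, and on cohomology the map is the identity under the identification given by the zeroth and first moment functionals $f\mapsto(\int f,\int xf)$. Hence $\mathcal{H}((a,b))\to\mathcal{H}((c,d))$ is a quasi-isomorphism (the $\R\hbar[-1]$ summand being carried identically). Now $C_\ast$ preserves quasi-isomorphisms: filter $C_\ast(\mathcal{H}(U))=\Sym^\ast(\mathcal{H}(U)[1])$ by number of symmetric factors; this filtration is exhaustive, bounded below, and preserved by the Chevalley differential (its internal part preserves the number of factors, its bracket part lowers it by one), with associated graded $\Sym^n(\mathcal{H}(U)[1])$ carrying only the internal differential. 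Over $\R$ (characteristic zero) $\Sym^n$ of a quasi-isomorphism is a quasi-isomorphism, by K\"unneth and exactness of $S_n$-(co)invariants --- the completed tensor products at issue have the K\"unneth property, as the footnotes in Section~\ref{free field ex} recall. Comparing the two spectral sequences gives that $\Obs^q((a,b))\to\Obs^q((c,d))$ is a quasi-isomorphism; in particular $H^\ast(\Obs^q)$ is locally constant.

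\textbf{Part (2).} First identify $A:=H^\ast(\Obs^q(\R))$ as a graded $\R[\hbar]$-module. Since $H^\ast(\mathcal{H}((a,b)))$ is concentrated in a single cohomological degree, every transferred $L_\infty$-operation on it vanishes for degree reasons, so $\mathcal{H}((a,b))$ is $L_\infty$-quasi-isomorphic to an abelian graded Lie algebra of dimension $3$, and $H^\ast(\Obs^q((a,b)))\cong\Sym(\R\langle p,q,\hbar\rangle)=\R[p,q,\hbar]$, concentrated in degree $0$; by the Remark following the Lemma, $q$ is the class of $\phi\mapsto\phi(0)$ and $p$ that of $\phi\mapsto\phi'(0)$, each represented by a compactly supported bump with prescribed zeroth and first moments. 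By local constancy (Lurie's theorem) the product on $A$ is the factorization product for two intervals $I_L$ to the left of $I_R$. Modulo $\hbar$ this is the commutative multiplication on $\R[p,q]$, because $\Obs^{cl}$ is an honest commutative dg algebra. The deviation from commutativity is first order in $\hbar$: on $\Obs^q(\R)$ the differential $\d=\d_{cl}+\hbar\{-,-\}$ is not a derivation of the commutative product, so if $\tilde p,\tilde q$ are cocycles supported in $I_L$ resp.\ $I_R$ representing $p,q$, and $\gamma$ is the explicit primitive (a double integral, as in the proof of the Lemma) with $\d\gamma$ the difference of a representative of $p$ supported in $I_L$ and one supported in $I_R$, then computing $m(\tilde p\otimes\tilde q)-m(\tilde q\otimes\tilde p)$ modulo $\d$-exact terms produces a sum of terms $\hbar\{-,-\}$ pairing a field-linear cocycle with $\gamma$; evaluating these with the moment conditions and integration by parts yields $[p,q]=\hbar$ in $A$. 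Finally, since $[p,q]=\hbar$ holds in $A$ there is an algebra map from the Weyl algebra to $A$; it is filtered for the symmetric-degree filtrations, and an isomorphism on associated graded (each associated graded being $\R[p,q,\hbar]$ with the commutative product --- the symmetric-degree spectral sequence for $\Obs^q(\R)$ degenerates since its $E_1$-page sits in cohomological degree $0$), hence an isomorphism. So $A$ is the Weyl algebra.

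\textbf{Part (3).} Translation invariance of $\Obs^q$ equips $A$ with a derivation $H$, the infinitesimal generator of the $\R$-action, as recalled in Section~\ref{observables}. Evaluating it on generators by differentiating the translation action on the chosen representatives, $\tfrac{d}{dt}\big|_{t=0}\phi(t)=\phi'(0)$ gives $H(q)=p$, while $\tfrac{d}{dt}\big|_{t=0}\phi'(t)=\phi''(0)$ represents a $\Delta$-exact observable, so $H(p)=0$ (and $H(\hbar)=0$). On the other hand $\tfrac12\hbar^{-1}[p^2,-]$ is a derivation of the Weyl algebra with $\tfrac12\hbar^{-1}[p^2,q]=\tfrac12\hbar^{-1}(p[p,q]+[p,q]p)=p$ and $\tfrac12\hbar^{-1}[p^2,p]=0$. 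Since a derivation of the Weyl algebra is determined by its values on $p$ and $q$, we conclude $H=\tfrac12\hbar^{-1}[p^2,-]$.

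\textbf{Main obstacle.} The crux is the non-commutativity computation in Part~(2). Unlike the classical antibracket, which vanishes on $H^\ast(\Obs^{cl})$ for degree reasons, the relation $[p,q]=\hbar$ is a genuine factorization-algebra phenomenon, invisible on any single interval, and extracting it requires a careful chain-level computation with disjoint intervals, cocycle representatives, and the explicit primitives furnished by the Atiyah-Bott lemma, keeping precise track of signs and of the failure of $\d$ to be a derivation of the commutative product. The functional-analytic points (K\"unneth for the completed tensor product, continuity of the homotopies) are real but, as the paper stresses, may be taken on faith.
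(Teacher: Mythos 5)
Your argument is correct and follows essentially the same route as the proof the paper points to in \cite{CosGwi11}: local constancy via the moment functionals on $H^\ast(\E^!_c)$ and the symmetric-degree filtration on Chevalley chains, the commutator $[p,q]=\hbar$ extracted by moving a representative of $p$ past one of $q$ and picking up the $\hbar\{-,-\}$ correction to the Leibniz rule, and the derivation identified on the generators $p,q$. The only point to tighten is the phrase ``$m(\tilde p\otimes\tilde q)-m(\tilde q\otimes\tilde p)$'': with fixed representatives this vanishes identically in $\Sym$, so one must (as you in fact then do) use representatives of $p$ in two different intervals and compare via the primitive $\gamma$.
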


A proof can be found in the section on quantum mechanics and the Weyl algebra in \cite{CosGwi11}.

\section{Factorization algebras associated to SUSY gauge theories}

In this section, we will mostly consider the $\CN=1$ twisted SUSY gauge theory.
\subsection{Replacing $\C^2$ by a complex surface}

In section \ref{SUSY}, we discussed twistings of SUSY gauge theories. They arose as parts of a theory invariant under some $Q\in S^+$ and gave a holomorphic field theory on $\C^2$.

Recall that  for $\CN=1$, the fields $\E$ of the twisted theory on $\C^2$ are built from a Lie algebra $\g$ with associated elliptic Lie algebra $\CL_{\CN=1}=\E[-1]$,
\begin{equation}\label{L on C^2}
\CL_{\CN=1}=\Omega^{0,\ast}(\C^2,\g)\oplus\Omega^{2,\ast}(\C^2,\g^\vee)[-1]\cong\Omega^{0,\ast}(\C^2,\g\oplus\g^\vee[-1]),
\end{equation}
with differential $\bar{\partial}$, and Lie bracket determined by 
$$[\beta,\beta']=0, \quad [\alpha, \beta]=\ad^*_{\alpha}(\beta)\in\Omega^{2,\ast}(\C^2,\g^\vee)[-1], \quad [\alpha,\alpha']\in\Omega^{0,\ast}(\C^2,\g),$$
for $\alpha,\alpha'\in\Omega^{0,\ast}(\C^2,\g), \beta,\beta'\in\Omega^{2,\ast}(\C^2,\g^\vee)[-1]$.
The invariant pairing on $\CL_{\CN=1}$ is given by
$$\langle\phi\otimes X,\psi\otimes Y\rangle=\int\phi\wedge\psi\,\langle X,Y\rangle_{\g},$$
where $X\in\g$, $Y\in\g^\vee$, and $\phi,\psi\in\Omega^{0,\ast}(\C^2)$.\footnote{This is actually only well-defined for compactly supported sections, but this technical difficulty can be overcome by passing to a quasi-isomorphic chain complex similar to what we did in \ref{free field ex}. See \cite{CosGwi11} for details.}

Our Chern-Simons action functional is
$$S(\chi)=\frac12\langle\chi,\bar{\partial}\chi\rangle+\frac16\langle\chi,[\chi,\chi]\rangle$$
for a general field $\chi$.

We saw at the end of section \ref{SUSY} that this twisted $\CN=1$ theory is the space of solutions to the equations of motion for the cotangent theory to the pointed moduli problem of holomorphic principal $G$-bundles on $\C^2$. This theory makes sense on a general complex surface $X$, and is the {\em cotangent theory to the pointed moduli problem of holomorphic principal $G$-bundles on $X$},
$$T^*[-1]\operatorname{Bun}_G(X).$$
Similarly to before, we find that the moduli space of solutions to the equations of motion is
\begin{equation*}
\left\{(P,\phi)| P \mbox{ a principal $G$-bundle on }X, \phi\in H^0_{\bar{\partial}}(X, K_X\otimes\g_{\ad}^\vee)\right\},
\end{equation*}
where $K_X$ is the canonical bundle on $X$ and $\g=\mathrm{Lie}(G)$. This problem corresponds to the elliptic Lie algebra
$$\CL_{\CN=1}(X)=\Omega^{0,\ast}(X,\g)\oplus \Omega^{0,\ast}(X, \g^\vee\otimes K_X)[-1].$$
This is because the equations of motion for $\CL_{\CN=1}(X)$ give
$$\bar{\partial}\alpha+\frac12[\alpha,\alpha]$$
for $\alpha\in \Omega^{0,1}(X,\g)$, which is the Maurer-Cartan equation, and for $\beta\in\Omega^{2,0}(X,\g^\vee)$,
$$\bar{\partial}_\alpha \beta\coloneq\bar{\partial}\beta+[\alpha,\beta]=0.$$
Thus, the principal bundle corresponds to a Maurer-Cartan element in $\Omega^{0,\ast}(X,\g)$ and $\phi$ to the element $\beta$.

Recall that the classical observables are functions on the (derived) space of solutions to the equations of motion. If our theory is given by the elliptic Lie algebra $\CL$, the solutions to the equations of motion are given by Maurer-Cartan elements of $\CL$, i.e.~$\chi\in\CL$ such that $\bar{\partial}\chi+\frac12[\chi,\chi]=0$. If $U\subseteq\C^2$ is open, then the classical observables on $U$ are Lie algebra cochains of $\CL(U)$,
$$\Obs^{cl}(U)=C^\ast(\CL(U))=\widehat{\Sym}^\ast(\CL(U)^\vee[-1]).$$

In our case, $\CL=\CL_{\CN=1}(X)$ is the semi-direct product, i.e.~the split-zero extension $\h\ltimes M$ of $\h=\Omega^{0,\ast}(\C^2,\g)$ with $M=\Omega^{2,\ast}(\C^2,\g)$, so if $U$ is a ball, we essentially get
$$\Obs^{cl}(U)=C^\ast(\h\ltimes M)=C^\ast\left(\mathrm{Hol}(U)\otimes\g,\widehat{\Sym}^\ast\left((\mathrm{Hol}(U)\,\d z_1\d z_2\otimes\g)^\vee\right)\right),$$
a fancy (derived) version of functions on $\{\phi\in\mathrm{Hol}(U)\,\d z_1\d z_2\otimes\g\}/\mbox{Gauge}$.

\subsection{Quantization}

Recall that by quantization, we mean that we deform the commutative factorization algebra of classical observables to a quantum one. Essentially the differential is deformed by using the BV Laplacian and by replacing the classical action of our field theory by a quantum one which satisfies a renormalized BV quantum master equation. In our case, we find that there is a quantization, and it even is unique:

\begin{thm}
The $\CN=1$ minimally twisted supersymmetric gauge theory on a complex surface $X$ with trivial canonical bundle,  perturbing around any holomorphic $G$-bundle for a simple algebraic group $G$, admits a unique quantization compatible with certain natural symmetries.
\end{thm}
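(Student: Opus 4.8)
The plan is to set up the quantization problem in the cohomological/obstruction-theoretic framework of \cite{Cos11} (sketched in the appendix), where a quantization of a classical BV theory $\CL$ exists term-by-term in $\hbar$ provided that a sequence of obstruction classes $O_n \in H^1(\CO_{loc}(\E))$ vanishes, and where the space of lifts at each stage is a torsor for $H^0(\CO_{loc}(\E))$. Concretely, one must (i) identify the relevant deformation complex $\CO_{loc}(\E)$ for the holomorphic BF theory (equivalently holomorphic Chern--Simons on $\C^{2\mid 1}$) attached to $\CL_{\CN=1}(X) = \Omega^{0,*}(X,\g\oplus\g^\vee[-1])$, (ii) show the obstructions vanish so that a quantization exists, and (iii) show the torsor of lifts is rigid once one imposes the ``natural symmetries''. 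The symmetries in question are the geometric ones: $\C^\times$-rescaling acting on $\C^2$ (coming from the $\C^\times$ used to define the twist, so that the quantization is compatible with the grading), translation invariance, rotation invariance, and — crucially — the fact that the theory is defined on any complex surface with trivial $K_X$ in a way compatible with restriction to open sets, which forces the local interaction terms to be built from holomorphic-translation-invariant local functionals of the right weight.

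**The key computation** is the cohomology of the local deformation complex. Using the standard identification (as in \cite{Cos11b}, \cite{Cos13}), local functionals for a holomorphic theory on $\C^2$ are computed by a Gelfand--Fuks-type complex: $H^*(\CO_{loc}(\E))$ is expressed in terms of the Lie algebra cohomology of $\g$ with coefficients in modules built from jets of $(0,*)$-forms, i.e.\ something like $H^*_{Lie}\big(\g[\![z_1,z_2,dz_1,dz_2]\!], \ldots\big)$ tensored with the de Rham forms in the holomorphic directions, cut down by the weight constraint imposed by the $\C^\times$-action. The point is that once one restricts to functionals of the correct cohomological degree (degree $1$ for obstructions, degree $0$ for lifts) \emph{and} the correct weight under rescaling \emph{and} invariant under rotation and translation, the relevant cohomology groups are controlled by low-degree Lie algebra cohomology of the simple Lie algebra $\g$: $H^0(\g)=\C$, $H^1(\g)=0$, $H^2(\g)=0$ (Whitehead), and $H^3(\g)=\C\cdot\langle-,[-,-]\rangle$ (the Cartan $3$-cocycle). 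The vanishing $H^1(\g)=H^2(\g)=0$ is what kills the obstruction space, and the fact that the only surviving classes sit in degrees that contribute a one-parameter (or zero-parameter) family, absorbed by the rescaling symmetry, is what gives uniqueness.

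**The main obstacle** is step (iii): establishing that the symmetry constraints genuinely rigidify the quantization rather than merely cutting it down to a finite-dimensional family. One has to be careful that the ``natural symmetries'' are exactly the ones for which (a) the classical theory is equivariant, so the obstruction-theory machinery can be run $G$-equivariantly for $G = \C^\times \ltimes (\text{translations} \times \text{rotations})$, and (b) the resulting equivariant local cohomology in degrees $0$ and $1$ (at the relevant weights) vanishes. In practice this means combining the grading/weight bookkeeping from the twist construction (recalled in Section~\ref{SUSY}) with the Gelfand--Fuks computation; the trivial-$K_X$ hypothesis enters precisely to make the pairing $\langle-,-\rangle$ on $\CL_{\CN=1}(X)$ and hence the BV structure globally well-defined, and to ensure the local analysis on $X$ reduces to the flat model on $\C^2$. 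A secondary technical point is checking that the cochain-level obstruction (a priori only a cohomology class) can be made to vanish on the nose after adjusting by an exact term while staying within the symmetric functionals — a standard but nontrivial spectral-sequence / equivariant-resolution argument. Once the local cohomology vanishing is in hand, existence and uniqueness follow formally by induction on the order in $\hbar$.
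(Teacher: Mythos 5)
Your overall strategy is the one the paper itself uses (and defers to \cite{Cos13} for): run the obstruction theory of \cite{Cos11} order by order in $\hbar$, with the obstruction to each lift living in $H^1(\CO_{loc})$ and the ambiguity a torsor for $H^0(\CO_{loc})$, compute these groups by a Gelfand--Fuks-type reduction to Lie algebra cohomology of $\g[\![z_1,z_2]\!]$-jets, and use the holomorphic-translation, rotation, and $\C^\times$-weight constraints to kill the relevant degrees. That matches the paper's sketch, so the architecture of your argument is fine.

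The one point you get wrong is the role of the hypothesis that $K_X$ is trivial. You say it is needed ``to make the pairing $\langle-,-\rangle$ on $\CL_{\CN=1}(X)$ and hence the BV structure globally well-defined.'' It is not: the theory is the cotangent theory $\CL\oplus\CL^![-3]$, with $\CL_{\CN=1}(X)=\Omega^{0,\ast}(X,\g)\oplus\Omega^{0,\ast}(X,\g^\vee\otimes K_X)[-1]$, and the degree $-3$ pairing is the Serre-duality pairing $\Omega^{0,\ast}(X,\g)\otimes\Omega^{0,\ast}(X,\g^\vee\otimes K_X)\to\Omega^{0,\ast}(X,K_X)\to\C$, which exists on \emph{any} complex surface --- indeed the paper notes that the twisted $\CN=2,4$ theories quantize on any complex surface, with no condition on $K_X$. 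What the triviality of $K_X$ actually buys, as the remark immediately following the theorem explains, is that the base $\op{Bun}_G(X)$ of the cotangent theory is itself symplectic (its tangent complex $\Omega^{0,\ast}(X,\g_P)[1]$ is self-dual when $K_X\cong\CO_X$), and the theorem is then an instance of the general result that the cotangent theory to an elliptic moduli problem which is \emph{already symplectic} admits a quantization, unique up to the natural symmetries. This is not a cosmetic point: the extra symmetry/structure coming from the symplectic form on the base is precisely what rigidifies the torsor over $H^0(\CO_{loc})$ in step (iii) of your plan, so identifying the hypothesis correctly is part of closing the uniqueness argument rather than an afterthought about global well-definedness.
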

The proof of this is given in \cite{Cos13}. It relies on the renormalization theory from \cite{Cos11} which reduces it to a cohomological calculation. More precisely, \cite{Cos11} tells us that, order by order in $\hbar$, the obstruction to quantizing to the next order lies in $H^1(\CO_{loc})$, and the ambiguity in quantizing is given by $H^0(\CO_{loc})$. Similar techniques can be used to show that the twisted $\CN=2$ and $4$ theories can be quantized on any complex surface $X$.

\begin{rem}
This theorem is a special case of a very general result. Recall that the fields of the twisted $\CN=1$ theory are $T^\ast[-1] \op{Bun}_G(X)$.  Since $X$ has trivial canonical bundle, $\op{Bun}_G(X)$ has a symplectic form already.   The general result is that the cotangent theory to an elliptic moduli problem which is already symplectic has a natural quantization (it's the unique quantization compatible with certain symmetries).
\end{rem}

\subsection{The relation to vertex algebras}

As we saw in the previous section, quantization gives a factorization algebra of quantum observables, which looks like
$$\Obs^q:U\longmapsto \left(C^\ast(\CL(U))[\![\hbar]\!], \d\right),$$
where modulo $\hbar$, $\d$ coincides with the differential $\d_{CE}$ on $C^\ast(\CL(U))$, cf.~\ref{quantization}. In particular, the factorization algebra structure of our minimally twisted $\CN=1,2,4$ theory on $\C^2$ associates a product to each configuration of $k$ balls inside a larger one in $\C^2$. Now consider the situation where the big ball is centered at the origin. By translation invariance of our theory, up to isomorphism $\Obs^q(B_r(z))$ is independent of $z$, and we call this $V_r$. Thus, for every  $p=(z_1,\ldots,z_k)$, a point in the parameter space
$$P(r_1,\ldots,r_k|s)=\{k\mbox{ disjoint balls of radii $r_1,\ldots,r_k$ inside } V_s\},$$
we get a map $m_p:V_{r_1}\otimes\cdots \otimes V_{r_k}\longrightarrow V_s.$
 \begin{center}
 \begin{minipage}{3cm}
 \begin{tikzpicture}[scale=0.3]
 {\tiny
 \draw (0,0) circle (5);
 \draw (2.5,-1) circle (1.8) node {$B_{r_k}\!(z_k)$};
 \draw (-1.5,1.8) circle(1); 
 \draw (-1.5,3.5) node {$B_{r_1}\!(z_1)$};
 \draw (-2.2,-0.8) circle (1.0);
 \draw (-2, -2.4) node {$B_{r_2}\!(z_2)$};
 \draw (-0.3, -1.1) node {\tiny$\cdots$};
 }
 {\small
 \draw (5, 4) node {$B_s\!(0)$};
 }
 \end{tikzpicture}
 
 \end{minipage}
\hspace{0.5cm} $\rightsquigarrow$\hspace{0.5cm}
\begin{minipage}{6cm}
$m_p:V_{r_1}\otimes\cdots\otimes V_{r_k}\longrightarrow V_s$
\end{minipage}
 \end{center}

This map depends smoothly on the parameter, i.e.
$$V_{r_1}\otimes\cdots \otimes V_{r_k}\longrightarrow V_s\otimes C^{\infty}(P(r_1,\ldots,r_k|s)).$$
If the field theory is holomorphic (as in our case), this map lifts to a cochain map of degree 0 compatible with composition
$$V_{r_1}\otimes\cdots \otimes V_{r_k}\longrightarrow V_s\otimes \Omega^{0,\ast}(P(r_1,\ldots,r_k|s)),$$
which leads to a map in cohomology,
\begin{equation}\label{vertex map coho}
H^\ast (V_{r_1})\otimes\cdots \otimes H^\ast(V_{r_k})\longrightarrow H^\ast(V_s)\otimes H^\ast_{\bar\partial}\left(P(r_1,\ldots,r_k|s)\right).
\end{equation}

\begin{rem}
In complex dimension 1, the analogous structure is that of a vertex algebra.
Consider the multiplication arising from the factorization algebra structure for 2 balls inside a larger one. Assume that one is centered at the origin, $B_{r_1}(z_1)=B_{r_1}(0)=:B_{r_1}$.  Let $P'(r_1,r_2 \mid s)$ denote the subspace of $P(r_1,r_2 \mid s)$ where the first ball is centered at the origin.  
 \begin{center}
 \begin{minipage}{3cm}
 \begin{tikzpicture}[scale=0.3]
 {\tiny
 \draw (0,0) circle (4.5);
 \draw (0,0) circle(1.2);
 \draw (-1.7,-0.7) node {$B_{r_1}$};
 \fill (0,0) circle (0.1) node [anchor=north] {$0$};
 \draw (2.2,-2.2) circle (1.2);
 \draw (3.1,-0.4) node {$B_{r_2}\!(z)$};
  \fill (2.2,-2.2) circle (0.1) node [anchor=north] {$z_{}$};
 }
 {\small
 \draw (4.7, 3.7) node {$B_s\!(0)$};
 }
 \end{tikzpicture}
 \end{minipage}
\hspace{0.5cm} $\rightsquigarrow$
\begin{minipage}{8.5cm}
\begin{equation}\label{chain OPE}V_{r_1}\otimes V_{r_2}\longrightarrow V_s\otimes \Omega^{0,\ast}(P(r_1,r_2|s))\end{equation}
\end{minipage}
 \end{center}
Assuming $B_{r_1}$ to be centered at the origin, the center of the second disk $z=z_{2}$ of radius $r_2$ can be anywhere in the annulus of radii $r_1+r_2$ and $s-r_2$. Thus, our parameter space $P'(r_1,r_2 \mid s)$ consists of this annulus. Let us look at the maps analogous to \eqref{vertex map coho}.  The 0th cohomology of $P'(r_1,r_2|s)$ consists of holomorphic functions on the annulus which are just power series in $z$ and $z^{-1}$ which converge on the annulus.  So we get a map of degree zero
$$H^{\ast}(V_{r_1})\otimes H^{\ast}(V_{r_2})\longrightarrow H^\ast(V_s)\otimes \C\{z,z^{-1}\},$$
where $\C\{z,z^{-1}\}$ refers to those Laurent series which converge on the appropriate annulus.  Moreover, in one dimension, all  higher cohomologies vanish, so that's all we get. See \cite{Gwi12} for the worked out example of the $\beta\gamma$ system, and \cite{CosGwi11} for the general theorem that holomorphically translation-invariant factorization algebras in one complex dimension have the structure of a vertex algebra on their cohomology. 
\end{rem}

\begin{rem}
For holomorphic theories in complex dimension $>1$, it's better to use polydiscs instead of discs.  Thus,  $B_{r} \subset \C^2$ should be understood as the product $D_r \times D_r$ of two discs of radius $r$ in $\C$. 
\end{rem}

In our case of a twisted SUSY gauge theory on $\C^2$, we get a two dimensional analog of vertex algebras, i.e.~for $\CN=1,2,4$ and for any semi-simple Lie algebra we get a two dimensional vertex algebra. The problem is to compute this object. Recall from \eqref{vertex map coho} that on cohomology, we get a map of degree zero. As in dimension 1, we restrict to the space $P'(r_1,r_2 \mid s)$ where the first polydisc is centered at the origin. We can identify $P'(r_1,r_2 \mid s)$ with the two complex dimensional analog of an annulus: the complement of one polydisc in another. 

Again, we can compute the Dolbeault cohomology of this space.  By Hartog's theorem, every holomorphic function on $P'$ extends to zero, so as the zeroth cohomology we just get holomorphic functions on the polydisc and therefore a map of degree zero
\begin{equation}\label{OPE1}
H^{\ast}(V_{r_1})\otimes H^{\ast}(V_{r_2})\longrightarrow H^\ast(V_s)\otimes \C\{z_1,z_2\}.
\end{equation} 
(We use the notation $\C\{z_1,z_2\}$ to refer to series which converge on the appropriate polydisc).  

This map extends to $z_1=z_2=0$; this allows us to construct a commutative algebra from the spaces $H^\ast (V_r)$.  Let us assume (as happens in practice) that the map $H^\ast(V_r) \to H^\ast(V_s)$ (with $r < s$) associated to the inclusion of one disc centered at the origin into another is injective.  Then, let 
$$
\mathcal{V} = \bigcup H^\ast(V_r)
$$
and let $F^r \mathcal{V} = H^\ast(V_r)$.  Then, the map \eqref{OPE1} gives $\mathcal{V}$ the structure of a commutative algebra with an increasing filtration by $\R_{> 0}$.  

 The vector fields $\frac{\partial}{\partial z_i}$ act on $H^\ast (V_r)$; they extend to commuting derivations of the commutative algebra $\mathcal{V}$.  The map in equation \eqref{OPE1} (or rather it's completion where we use $\C[\![z_1,z_2]\!]$) is completely encoded by the  filtered commutative algebra $\mathcal{V}$ with its commuting derivations.  

However, there is more structure. We have the following identification:
$$
H^1_{\dbar}( P' ) = z_1^{-1}z_2^{-1}\C\{z_1^{-1},z_2^{-1}\}.
$$
Thus, the first Dolbeault cohomology of the two complex dimensional annulus consists of series in $z_i^{-1}$ with certain convergence properties. So we find that there is a map
\begin{equation}\label{OPE2}
\mu : H^{\ast}(V_{r_1})\otimes H^{\ast}(V_{r_2})\longrightarrow H^\ast(V_s)\otimes z_1^{-1}z_2^{-1}\C\{z_1^{-1},z_2^{-1}\}.
\end{equation}
Thus, at the level of cohomology, \eqref{chain OPE}, resp.~\eqref{OPE1} and \eqref{OPE2}, form an analog of the operator product expansion of a vertex algebra.  

One can check that the structure given by \eqref{OPE2} is a kind of Poisson bracket with respect to the commutative product obtained from \eqref{OPE1}. To define this we need some notation. If $\alpha,\beta \in H^\ast(V_r)$, then $\mu(\alpha,\beta)$ is a class in $H^1_{\dbar}(P') \otimes \mathcal{V}$. (Recall that $P' \subset \C^2$ is an open subset which is the complement of one polydisc in another.)  Thus, $P'$ retracts onto a $3$-sphere $S^3 \subset P'$.  Then, for every $f \in \C[z_1,z_2]$, one can define a bracket by
$$
\{\alpha, \beta\}_f = \int_{S^3} \mu (\alpha,\beta) f
\d z_1 \d z_2 \in \mathcal{V}.
$$
This makes sense, as $\mu(\alpha,\beta) \d z_1 \d z_2 f$ is a closed $3$-form on $P'$, with coefficients in $\mathcal{V}$.  The integral only depends on the homology class of the sphere $S^3 \subset P'$, which we choose to be the fundamental class. 
 
This bracket gives a map
$$
\{-,-\}_f : H^\ast(V_{r_1}) \otimes H^\ast(V_{r_2}) \to \mathcal{V}
$$
and extends to a map 
$$
\{-,-\}_f: \mathcal{V} \otimes \mathcal{V} \to \mathcal{V}.
$$

One can check that $\{-,-\}_f$ is a derivation in the first factor for the commutative product on $\mathcal{V}$, and satisfies an identity similar to the Jacobi identity.  Let us explain the Jacobi identity we find. If $g \in \C[z_1,z_2]$ let us use the notation
$$
g(z_1+w_1, z_2 + w_2) = \sum g'(z_1,z_2) g''(w_1,w_2).
$$
That is, $\C[z_1,z_2]$ is a Hopf algebra, with coproduct coming from addition on the plane $\C^2$, and we are using the Sweedler notation to write the coproduct $\delta (g)$ of an element $g$ as $\delta (g) = \sum g' \otimes g''$.

Then, the analog of the Jacobi identity in our situation is the following: 
$$
\{\{\alpha,\beta\}_f, \gamma \}_g = \{\{\alpha,\gamma\}_g,\beta\}_f + \sum \{\alpha, \{\beta,\gamma\}_{g'}\}_{f g''}.
$$
Note that in the case $f = g = 1$, this is the usual Jacobi identity. 

All these relations follow from the axioms of a holomorphically translation-invariant factorization algebra using Stoke's theorem.  

We have not presented all details of the structure of a higher-dimensional cohomological vertex algebra (i.e. the structure present on the cohomology of a holomorphically-translation invariant factorization algebra).  Hopefully this will be developed in full elsewhere. The interested reader might consider working out and writing down the entire structure, including all relations satisfied by the Poisson brackets described above.

\begin{rem} There is a similar story for topological field theories on $\R^k$.  There, one finds that the $2$-point operator product is, at the level of cohomology, a map
$$
H^\ast (V) \otimes H^\ast(V) \to H^\ast(V) \otimes H_{dR}^\ast ( \R^k \setminus \{0\}).
$$ 
Here $V$ is the complex $\Obs^q(D)$ for any disc $D$ in $\R^k$. 
On the right hand side of this expression we find the de Rham cohomology of a thickened sphere in $\R^k$, whereas in the holomorphic case we found the Dolbeault cohomology of a simliar region. 

At the cochain level, this operator product gives the complex $V$ the structure of an $E_k$-algebra.  If $k > 1$, then the class in $H^0 (\R^k \setminus \{0\})$ gives $H^\ast(V)$ the structure of a commutative algebra, and the class in $H^{k-1} (\R^k \setminus \{0\})$ gives $H^\ast(V)$ a Poisson bracket of cohomological degree $1-k$.  

The holomorphic situation is very analogous: in dimension $k > 1$, we have a commutative algebra with an infinite family of compatible Poisson brackets. 

Examples of $2$-dimensional topological field theories are given by topological twists of $2$-dimensional supersymmetric gauge theories.  For example, the $B$-twist of the $2$-dimensional $\CN=(2,2)$ gauge theory gives the theory described by the elliptic Lie algebra 
$$(\Omega^\ast(\R^2, \g[\eps]),\d_{dR}, [\,,]),$$
where $\eps$ is a square-zero parameter of cohomological degree $1$.  This is entirely analogous to the fact that the $2$-complex dimensional theory arises as a twist of the $\CN=1$ gauge theory; the only difference is that the Dolbeault complex on $\C^2$ has been replaced by the de Rham complex on $\R^2$.  

Further examples arise from topological $\sigma$-models, such as topological quantum mechanics \cite{GraGwi11}, the Poisson $\sigma$-model \cite{CatFel01, Kon97}, and the $B$-model.  (At the perturbative level, the factorization algebra associated to the $A$-model is uninteresting). 

Summarizing, we get the following table
\begin{center}
\renewcommand{\arraystretch}{2}
\begin{tabular}{l|c|p{8.5cm}|p{4cm}}
{\bf Space} & {\bf $Q$} & {\bf Structures on the cohomology of observables} & {\bf Examples/References}\\
\hline
$\R$ & $\d_{dR}$ & associative product  & topological quantum mechanics \cite{GraGwi11}\\
$\R^2$ & $\d_{dR}$ & commutative product \& degree -1 bracket & Poisson-Sigma model \cite{CatFel01}, topologically twisted $\CN=(2,2)$-gauge theory, the $B$-model\\
$\C$ & $\bar{\partial}$ &  holomorphic analog of a vertex algebra & Minimal twists of $2d$ SUSY field theories, for example \cite{Cos10a}, \cite{Cos11b}, $\beta\gamma$ system, \cite{Gwi12}\\
$\C^2$ & $\bar{\partial}$ & commutative product, $2$ commuting derivations, and family of Poisson brackets of degree $1$ parametrized by $f \in \C[z_1,z_2]$. & $\CN=1,2,4$ minimally twisted SUSY gauge theories \cite{Cos11b,Cos13}\\
\end{tabular}
\end{center}
\end{rem}

One can explicitly compute the map $\mu$ in \eqref{OPE2} for the simplest case.
\begin{ex}
Consider the abelian $\CN=1$ gauge theory, i.e.~$\g=\C$. Then
$$
H^\ast(V_r) = \mathscr{O}( \op{Hol} (B_r) \oplus \op{Hol}(B_r)[-1]),$$
where $\mathscr{O}$ indicates the algebra of formal power series. We will use $\phi, \psi$ to denote elements of the two copies of $\op{Hol}(B_r)$: $\phi$ is of degree $0$ and $\psi$ is of degree $1$.  Let $\alpha,\beta$ be the observables defined by
\begin{align*}
\alpha(\phi,\psi) &= \phi(0) \\
\beta(\phi,\psi) &= \psi(0). 
\end{align*}

Then, one finds that the commutative product does not change, but that the map $\mu$ in \eqref{OPE2} is given by the formula
$$\mu(\phi,\psi)=z_1^{-1}z_2^{-1}\hbar c$$
for a certain constant $c$. 
\end{ex}

\subsection{A deformation of the theory.}
Finally, the $\CN=1$ theory has a deformation which is holomorphic in two real dimensions, and topological in the two other real dimensions. This gives a new relation between the $\CN=1$ gauge theory and the Yangian (see \cite{Cos13} for a detailed discussion of this story). 

We deform the action functional on the space of fields $\Omega^{0,\ast}(\C^2,\g[1])\oplus\Omega^{2,\ast}(\C^2,\g^\vee)$ to
$$S^{new}(\alpha,\beta)=S^{old}(\alpha,\beta)+\int\alpha\,\d z_1\wedge\partial\alpha,$$
for $\alpha\in \Omega^{0,\ast}(\C^2,\g[1]), \beta\in\Omega^{2,\ast}(\C^2,\g^\vee)$. Note that this is not invariant under $SL(2,\C)$ anymore.  The moduli space of solutions to the equations of motion of this deformed theory turns out to be holomorphic $G$-bundles on $\C^2$ along with a compatible flat holomorphic connection in the $z_2$ direction.

More generally, such a deformation can be defined on any complex surface with a closed 1-form; in the previous case, we took the complex surface $\C^2$ with the closed 1-form $\d z_1$. As another example, consider the complex surface $\C^*\times \C$ with the closed 1-form $\frac{\d z_1}{z_1}$. The deformed theory on this complex surface can be projected down to $\R_{>0}\times \C$, where we find a theory we could call ``Chern-Simons theory for the loop group''. 

The quantum observables of the deformed theory on the surface $\C^2$ on a formal disc are
$$C^\ast(\g[\![z_1]\!])[\![\hbar]\!]=\Sym^\ast(\g[\![z_1]\!]^\vee)[\![\hbar]\!].$$
We would like to relate this to the Yangian of the Lie algebra $\g$, which is a quantization of the Hopf algebra $U(\g[\![z]\!])$. Here, we have cochains of $\g[\![z]\!]$ instead of the universal enveloping algebra. These are related by Koszul duality\footnote{There are some delicate issues with this Koszul duality statement: to make it work, we need to treat $C^\ast(\g)$ as a filtered commutative dga. Details are given in \cite{Cos13}.}: For a Lie algebra $\mathfrak{h}$,
$$\C\otimes_{C^\ast(\mathfrak{h})}^{\mathbb{L}}\C=(U(\mathfrak{h}))^\vee.$$

This deformed field theory is topological in the second complex direction, so, by fixing a formal disk in the other coordinate, it gives a locally constant factorization algebra on $\R^2$. Locally constant factorization algebras on $\R^2$ are (by a theorem of Lurie \cite{Lur12}) the same as $E_2$-algebras.  A theorem of Dunn (proved in \cite{Lur12} in the context we need) says that $E_2$-algebras are the same as $E_1$-algebras in $E_1$-algebras. The way to view an $E_2$-algebra as an $E_1$-algebra in $E_1$-algebras is by considering sections of the associated locally constant factorization algebra on an open square (or a strip). Then, the factorization algebra structure gives us two products, namely by including two squares into a third (horizontally) next to each other, or by including them (vertically) above each other. Now we can apply a version of Koszul duality to turn the second $E_1$-algebra structure of the $E_2$-algebra into that of a co-$E_1$-algebra, so we get an $E_1$-algebra in co-$E_1$-algebras, i.e.~a bialgebra, which, in fact, is a Hopf algebra. This gives us a Hopf algebra deforming $U(\g[\![z]\!])$: in \cite{Cos13} it is shown that this Hopf algebra is the Yangian. 

At least heuristically, the (partial) Koszul duality geometrically amounts to taking only the fields which vanish on the top and bottom of the strip. Thus, we consider observables on
$D_{strip}=D_z\times \strip$, where $D_z$ is a (formal) disk with coordinate $z$. Note that the observables on $D_{strip}$ are $\Sym^\ast(\g[\![z]\!]^\vee)$. Inclusion of strips vertically and horizontally give us two operations on the (partial) Koszul dual. 
\begin{enumerate}
\item horizontally: The horizontal inclusion of strips 
\begin{tikzpicture}[scale=0.6]
\draw[gray] (0,0) -- (3.5, 0);
\draw[gray] (0,0.4) -- (3.5, 0.4);
\fill[pattern color=gray, pattern=north west lines] (0,0) rectangle (3.5,0.4);

\draw (0.5,0) -- (1.5, 0);
\draw (0.5,0.4) -- (1.5, 0.4);
\fill[pattern=north east lines] (0.5,0) rectangle (1.5,0.4);

\draw (2,0) -- (3, 0);
\draw (2,0.4) -- (3, 0.4);
\fill[pattern=north east lines] (2,0) rectangle (3,0.4);
\end{tikzpicture}
gives an inclusion map $$D_z\times\strip \amalg D_z\times\strip \longrightarrow D_z\times\strip.$$
The factorization algebra structure thus gives an associative product on observables. Essentially this product is given by restricting fields from the large strip to the small strips inside, which gives a coalgebra structure on the fields. Then taking the dual when passing to the observables we get the algebra structure.
\item vertically: Now consider the vertical inclusion of strips
\begin{center}
\begin{tikzpicture}[scale=0.6]
\draw[gray] (0,0) -- (3, 0);
\draw[gray] (0,1.7) -- (3, 1.7);
\fill[pattern color=gray, pattern=north west lines] (0,0) rectangle (3,1.7);

\draw (0,0.3) -- (3, 0.3);
\draw (0,0.7) -- (3, 0.7);
\fill[pattern=north east lines] (0,0.3) rectangle (3,0.7);

\draw (0,1) -- (3, 1);
\draw (0,1.4) -- (3, 1.4);
\fill[pattern=north east lines] (0,1) rectangle (3,1.4);
\end{tikzpicture}
\end{center}
In this case, the condition that the fields vanish on the top and bottom of the strip give us the (additional) coalgebra structure. We can't restrict fields to the smaller strips because of the boundary condition, but instead, we can extend fields by 0 in between the strips and thus get an algebra structure on fields. Here one has to be careful about taking the dual to get the coalgebra structure on the observables, but this can be done in this case.
\end{enumerate}
Moreover, these operations are compatible and give $\Obs^q(D_{strip})=\Sym(\g[\![z]\!]^\vee)[\![\hbar]\!]$ the structure of a Hopf algebra.
\begin{thm}
The Hopf algebra $\Obs^q(D_{strip})$ obtained in this way is dual to the Yangian Hopf algebra.
\end{thm}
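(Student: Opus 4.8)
The plan is to reduce the identification to a deformation-theoretic rigidity statement about the Yangian. First I would pin down the classical limit: modulo $\hbar$ the observables on $D_{strip}$ are $C^\ast(\g[\![z]\!]) = \Sym^\ast(\g[\![z]\!]^\vee)$, and I would check that the product coming from horizontal stacking of strips (dual to corestriction of fields) together with the coproduct coming from vertical stacking (dual, via the boundary condition, to extension-by-zero of fields) make this into the Hopf algebra $U(\g[\![z]\!])^\vee$. The essential input is the Koszul-duality identity $\C\otimes_{C^\ast(\h)}^{\mathbb{L}}\C = U(\h)^\vee$ applied to $\h = \g[\![z]\!]$; here one must be careful, as the footnote above warns, to treat $C^\ast(\g[\![z]\!])$ as a \emph{filtered} commutative dga so that the Koszul dual is genuinely the enveloping algebra and not merely quasi-isomorphic to it, and to verify that \emph{both} structure maps, not just the underlying complex, dualize correctly.

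Next I would invoke rigidity, from two sides. On the field-theory side, the deformed theory retains $\g$-equivariance together with invariance under translation and rescaling of $z$ (with $z$ of weight $1$), and the renormalization machinery of \cite{Cos11} controls the quantizations order by order in $\hbar$: the obstruction at each stage lies in $H^1(\CO_{loc}(\E))$ and the ambiguity in $H^0(\CO_{loc}(\E))$, and a cohomological computation of the same flavour as in the uniqueness theorem above shows the quantization is essentially unique. On the algebra side, transporting this through Koszul duality, symmetry-compatible Hopf-algebra deformations of $U(\g[\![z]\!])$ are governed by a Gerstenhaber--Schack (bialgebra) deformation complex whose relevant $H^2$ is, for $\g$ simple, one-dimensional. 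Either way one concludes that a symmetry-compatible Hopf-algebra deformation of $U(\g[\![z]\!])$ is determined, up to an automorphism and a rescaling of $\hbar$, by its first-order term.

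Finally I would compute that first-order term and match it with Drinfeld's. Expanding the factorization product in Feynman diagrams, the $\hbar^1$-contribution to the coproduct comes from a single diagram built from the propagator of the deformed theory; the resulting bilinear correction is proportional to the quadratic Casimir $\sum_a t_a\otimes t^a$ spread across the two strips, which is exactly the $\hbar$-linear term of the coproduct in Drinfeld's original presentation of $Y(\g)$. Combined with the rigidity step, this forces $\Obs^q(D_{strip}) \cong Y(\g)^\vee$.

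The hard part will be the second step: identifying the relevant bialgebra-deformation cohomology and proving it is one-dimensional (equivalently, importing the known rigidity of the Yangian in a form adapted to this filtered/completed setting). The Koszul-duality bookkeeping in the first step is also genuinely delicate rather than routine, since the product \emph{and} coproduct must match, not just the underlying complex; and although the diagrammatic computation in the third step is concrete, it must be done carefully because it fixes the normalization of $\hbar$. All of this is carried out in \cite{Cos13}.
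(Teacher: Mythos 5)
The paper does not actually prove this theorem: it is stated as an imported result, with the construction of the two strip-inclusion operations sketched and the identification with the Yangian attributed entirely to \cite{Cos13}. So there is no in-paper argument to measure your proposal against; what can be said is that your outline is a faithful reconstruction of the strategy of that reference --- identify the classical limit of the (partial) Koszul dual with $U(\g[\![z]\!])^\vee$, establish rigidity of the quantization, and pin the deformation down by its first-order term.

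Two places where the sketch as written would not quite close. First, the rigidity input you actually need on the algebra side is Drinfeld's theorem that the Yangian is the \emph{unique} graded Hopf algebra quantization of the Lie bialgebra $(\g[\![z]\!],\delta)$ with its standard cobracket. One-dimensionality of a Gerstenhaber--Schack $H^2$ only controls the first-order ambiguity; by itself it says nothing about obstructions or residual freedom at order $\hbar^{\geq 2}$. What kills the higher-order freedom is compatibility with the $\C^\times$-action rescaling $z$ and $\hbar$ (with $z$ of weight $1$ and $\hbar$ of weight $1$), which is present in the deformed field theory and must be shown to survive both renormalization and the Koszul-duality step; your ``either way'' elides the fact that the field-theoretic uniqueness and the Hopf-algebraic uniqueness are both needed and must be matched through the duality. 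Second, the first-order datum to be computed is not merely ``the Casimir spread across the two strips'' but the full cobracket $\delta(f)=[\,f(z)\otimes 1+1\otimes f(w),\ \Omega/(z-w)\,]$ on $\g[\![z]\!]$, where $\Omega=\sum_a t_a\otimes t^a$, i.e.\ the co-Poisson structure on $U(\g[\![z]\!])$ determined by the Yangian $r$-matrix. Verifying that the one-loop diagram of the deformed theory produces exactly this cobracket --- and not some other solution of the classical Yang--Baxter equation --- is where the deformation term $\int \alpha\,\d z_1\wedge\partial\alpha$ genuinely enters, and it also fixes the normalization of $\hbar$. With those two points made precise, your outline is the argument carried out in \cite{Cos13}.
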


So far we have seen how the operator product in the topological direction is encoded by the Yangian Hopf algebra.  In \cite{Cos13} it is shown that the operator product in the holomorphic direction gives rise to a monoidal OPE functor
\begin{equation*}
F_{OPE}: \op{Fin}(Y(\g)) \times \op{Fin}(Y(\g)) \to \op{Fin}(Y(\g))(\!(\lambda)\!) ) \tag{$\dagger$} 
\end{equation*}
which is encoded by the $R$-matrix of the Yangian. Here, $\op{Fin}(Y(\g))$ refers to the monoidal category of finite-rank modules over the Yangian (we use the version of the Yangian which quantizes $\g[\![z]\!]$).  This OPE functor should be thought of as a relative of the OPE in the theory of vertex algebras, except that it takes place in the world of monoidal categories rather than that of vector spaces.

As explained in \cite{Cos13,Cos13b} these two results allows one to calculate exactly expectation values of certain Wilson operators in the twisted, deformed $\CN=1$ supersymmetric gauge theory.  The answer is expressed in terms of the integrable lattice model constructed from the $R$-matrix of the Yangian. 

\subsection{Other Riemann surfaces}
The field theory we are considering can be put on $\Sigma \times \R^2$ where $\Sigma$ is any Riemann surface equipped with a nowhere-vanishing holomorphic $1$-form.  This construction will associate an $E_2$-algebra to any such Riemann surface. In this subsection we will briefly discuss some conjectures about these $E_2$-algebras and related objects. 

If we take our Riemann surface to be $\C^\times$, equipped with the holomorphic volume form $\d z / z$, we find an $E_2$-algebra to which we can apply the Koszul duality considerations above to produce a Hopf algebra. 
\begin{conjecture}
The Hopf algebra Koszul dual to the $E_2$-algebra $\Obs^q(\C^\times \times \R^2)$ is dual to the quantum loop algebra $U_\hbar(\g\{z,z^{-1}\})$.
\end{conjecture}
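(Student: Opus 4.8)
The plan is to run the argument that identifies $\Obs^q(D_{strip})$ with the dual of the Yangian, replacing $\C$ by $\C^\times$ throughout. First one checks that the deformed, twisted $\CN=1$ theory admits an (essentially unique) quantization on $\C^\times\times\R^2$; as in the $\C$-case this follows from the renormalization and obstruction theory of \cite{Cos11,Cos13}, the point being that $\d z/z$ trivialises the canonical bundle of $\C^\times$ (so the underlying cotangent theory $T^\ast[-1]\op{Bun}_G(\C^\times)$ has the symplectic-type structure noted above) and simultaneously supplies the closed $1$-form making the theory topological along $\R^2$. Being topological along $\R^2$, the theory yields a locally constant factorization algebra on $\R^2$, hence by \cite{Lur12} an $E_2$-algebra; Dunn additivity together with the partial Koszul duality recalled above then turns it into a Hopf algebra, which I will call $\mathcal{A}$.

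Next I would pin down $\mathcal{A}$ to leading orders. The same computation as for $D_{strip}$ should show that the observables on a formal annular neighbourhood of the unit circle in $\C^\times$, times a disc in $\R^2$, are $\Sym^\ast(\g\{z,z^{-1}\}^\vee)[\![\hbar]\!]$, where $\g\{z,z^{-1}\}$ is the loop algebra of Laurent series convergent on the annulus; hence modulo $\hbar$ the Hopf algebra $\mathcal{A}$ is dual to $U(\g\{z,z^{-1}\})$, precisely the loop algebra that $U_\hbar(\g\{z,z^{-1}\})$ deforms. The crucial step is the first-order term: transporting the degree-one antibracket of the classical observables through Koszul duality equips $\g\{z,z^{-1}\}$ with a Lie bialgebra structure, and I would show, by the one-loop Feynman-diagram / $H^1(\CO_{loc})$ computation that is the annular analogue of the one carried out in \cite{Cos13}, that the resulting cobracket is the one built from the Laurent expansion of the propagator on $\C^\times$, i.e.~the \emph{trigonometric} $r$-matrix rather than the rational one that appears on $\C$. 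This is exactly the Lie bialgebra structure quantized by the quantum loop algebra.

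Finally one must upgrade this to an isomorphism of Hopf algebras. The $\C^\times$-action rotating the annulus grades $\mathcal{A}$ and makes the quantization homogeneous; combined with the identification of the semiclassical Lie bialgebra, I would invoke an Etingof--Kazhdan-type uniqueness of quantization of Lie bialgebras (or, as in the Yangian case, a vanishing of the relevant deformation complex in the pertinent degree) to conclude that $\mathcal{A}$ is forced to be $U_\hbar(\g\{z,z^{-1}\})^\vee$ as a Hopf algebra. Two consistency checks should accompany this: the scaling limit $z\to 1$ concentrates $\C^\times$ onto a formal disc, degenerating $\mathcal{A}$ to the $\C$-theory Hopf algebra, i.e.~the Yangian, matching the known degeneration of $U_\hbar(\g\{z,z^{-1}\})$ to $Y(\g)$; and the operator product in the holomorphic ($\C^\times$) direction should reproduce the trigonometric $R$-matrix of the quantum loop algebra, in parallel with the OPE functor $(\dagger)$ for the Yangian.

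The hard part will be twofold. The rigidity input is less clean here than for the Yangian: $\g\{z,z^{-1}\}$ with its trigonometric $r$-matrix does not enjoy as large a symmetry as $\g[\![z]\!]$ with the rational one, since on $\C^\times$ there is only the $\C^\times$ rotating the annulus and not a full $SL(2,\C)$ rescaling, so the homogeneity/uniqueness argument may have to be supplemented by an explicit computation of the coproduct and $R$-matrix of $\mathcal{A}$ to low order in $\hbar$ and a direct comparison with Drinfeld's formulas for $U_\hbar(\g\{z,z^{-1}\})$. Secondly, the analytic and formal-geometry bookkeeping is heavier on $\C^\times$ than on $\C$: one must control the filtered commutative dga needed for the Koszul duality (cf.~the footnote above) and the convergence conditions that distinguish $\g\{z,z^{-1}\}$ on a genuine annulus from the purely formal loop algebra, and it is in making the Hopf-algebra structure maps honestly well-defined, rather than well-defined up to these issues, that most of the technical labour will lie.
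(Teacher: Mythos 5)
The statement you are addressing is stated in the paper as a \emph{conjecture}, and the paper offers no proof of it; indeed the authors explicitly flag that ``there are some subtle issues which we have not addressed to do with which completion of $\g[z,z^{-1}]$ one should use to get precisely the Koszul dual of the $E_2$-algebra associated to the cylinder $\C^\times$.'' So there is no paper proof to compare against, and your text should be read as a research plan rather than a proof. As a plan it is sensible and follows the route the paper itself suggests (run the $D_{strip}$/Yangian argument with $\C$ replaced by $\C^\times$, identify the semiclassical Lie bialgebra, then rigidify), but it does not close the conjecture, and the places where it is incomplete are exactly the places you yourself flag as ``the hard part.''

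Concretely, three steps are asserted rather than established. First, the identification of the semiclassical cobracket with the trigonometric $r$-matrix is the mathematical content of the conjecture at first order in $\hbar$, and you defer it to an unperformed ``one-loop Feynman-diagram / $H^1(\CO_{loc})$ computation''; without it there is no evidence that the quantization is the quantum \emph{loop} algebra rather than some other deformation of $U(\g\{z,z^{-1}\})$. Second, the rigidity input you invoke to upgrade the semiclassical identification to a Hopf algebra isomorphism does not obviously exist: Etingof--Kazhdan gives \emph{existence} of a quantization of a Lie bialgebra, not uniqueness, and the cohomological vanishing that forces uniqueness in the rational/Yangian case of \cite{Cos13} relies on symmetries (the full scaling action) that, as you note, are absent on $\C^\times$. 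Until the relevant deformation complex is computed in the trigonometric setting, this step is a hope, not an argument. Third, the completion issue the paper raises is not bookkeeping: which topological version of $\g[z,z^{-1}]$ appears governs whether the Koszul dual is even well-defined as a Hopf algebra and whether it can be compared to any standard presentation of $U_\hbar(\g\{z,z^{-1}\})$. Your proposal is a reasonable outline of how one might eventually prove the conjecture, but each of these three points is a genuine gap, and the statement remains open.
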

Note that there are some subtle issues which we have not addressed to do with which completion of $\g[z,z^{-1}]$ one should use to get precisely the Koszul dual of the $E_2$-algebra associated to the cylinder $\C^\times$.  

Next, let us discuss the case of an elliptic curve $E$ equipped with a holomorphic volume form.  Modules for the $E_2$-algebra associated to an elliptic curve form a monoidal category which deforms the category of sheaves on the formal neighbourhood of the trivial bundle in the stack $\op{Bun}_G(E)$ of $G$-bundles on $E$. It is natural to conjecture that  this monoidal category should globalize to a monoidal deformation of the category of quasi-coherent sheaves on $\op{Bun}_G(E)$.    (Quantizations of categories of sheaves like this are considered in \cite{PanToeVaq11}, where it is shown that the stack $\op{Bun}_G(E)$ has a $1$-shifted symplectic form). 

Let us denote this putative quantization by $\op{QC}^q(\op{Bun}_G(E))$.  We conjecture that this monoidal category forms part of a kind of categorified two-dimensional field theory, so that there are analogs of familiar objects such as correlation functions.

More precisely, we conjecture the following.
\begin{enumerate}
\item For every collection of distinct points $p_1,\dots,p_n \in E$, there is a monoidal ``correlation functor'' 
$$
\op{Fin}(Y(\g)) \times \dots \times \op{Fin}(Y(\g)) \to \op{QC}^q(\op{Bun}_G(E)).
$$
Here, $\op{Fin}(Y(\g))$ refers to the monoidal category of finite-rank modules over the Yangian. 

If $M_1,\dots,M_n$ are modules for the Yangian, we denote by
$$
\left\langle M_1(p_1), \dots, M_n(p_n)\right\rangle \in \op{QC}^q(\op{Bun}_G(E))$$
the image of $M_1 \times\dots  M_n$ under the correlation functor.
\item
These correlation functors should quantize the pull-back map on sheaves associated to the map of stacks
$$
\op{Bun}_G(E) \to \left( B G[\![z]\!] \right)^n
$$
obtained by restricting a $G$-bundle on $E$ to the formal neighbourhood of the points $p_i$, where each such formal neighbourhood is equipped with its canonical coordinate arising from the $1$-form on $E$.
\item
All this data should vary algebraically with the positions of the points $p_i$ as well as over the moduli of elliptic curves equipped with a non-zero $1$-form. 
\item
The correlation functors should have a compatibility with the OPE-functor ($\dagger$) in the same way that ordinary correlation functions of a conformal field theory are compatible with the OPE.  For example, if $p, p+\lambda$ are two points in $E$ where $\lambda$ is a formal parameter, and $M,N$ are two modules for the Yangian, we expect that there is a monoidal natural isomorphism
$$
\left\langle M(p), N(p+\lambda)\right\rangle \cong \left\langle F_{OPE} (M,N) (p)  \right\rangle \in \op{QC}^q(\op{Bun}_G(E))(\!(\lambda)\!).
$$
\end{enumerate}
(In the last line, by $\op{QC}^q(\op{Bun}_G(E))(\!(\lambda)\!)$ we mean an appropriate category of $\C(\!(\lambda)\!)$-modules in $\op{QC}^q(\op{Bun}_G(E))$).

Note that if we replace $\op{Bun}_G(E)$ by its formal completion near the trivial bundle, all of this follows from the results of \cite{Cos13}.  Globalizing is the challenge. 

It is natural to speculate that there is some relationship between the desired quantization of $\op{Bun}_G(E)$ and elliptic quantum groups, but this is currently unclear.

In a similar way, for a general surface $\Sigma$ equipped with a nowhere-vanishing holomorphic $1$-form, one can also speculate the $E_2$-algebra of observables of our theory on $\Sigma$ times a disc is related to the quasi-Hopf algebras constructed by Enriquez and Rubtzov \cite{EnrRub99}. 

Kapustin \cite{Kap06} has shown that any four-dimensional $\mathcal{N}=2$ theory admits such a twist.  We hope that there is a similarly rich, and largely unexplored,  mathematical story describing such theories.

\vfill
\pagebreak
\appendix

\section{Moduli problems and field theories}\label{appendixA}

Throughout this text, field theories are described in terms of elliptic moduli problems which in turn are encoded as elliptic $L_\infty$-algebras. These terms and their relations are constantly used. However, we only defined SUSY field theories in an informal way, so we will give some ideas and definitions here. For the full definitions and detailed explanations, see \cite{Cos11b}.

Let $M$ be a manifold. The ideal definition of a classical field theory would be to say that a classical field theory on $M$ is a sheaf of derived stacks (of critical loci, the derived spaces of solutions to the equations of motion) on $M$ equipped with a Poisson bracket of degree one (coming from the BV formalism). To simplify things, we make two observations.

\begin{enumerate}
\item If $X$ is a derived stack and $x\in X$, then $T_xX[-1]$ has an $L_\infty$ structure, and this completely describes the formal neighborhood of $x$, \cite{KonSoi, Hin01, Toe06, Lur11}. Thus, near a given section, a sheaf of derived stacks can be described by a sheaf of $L_\infty$-algebras.
\item If $X$ is a derived stack which is $n$-symplectic in the sense of \cite{PanToeVaq11} , then $T_xX$ has an anti-symmetric pairing (of degree $n$), so $T_xX[-1]$ has a symmetric pairing (of degree $n-2$).  One can show that the $L_\infty$-structure on $T_x X[-1]$ can be chosen so that the pairing is invariant.  More precisely, one can prove  a formal Darboux theorem showing that formal symplectic derived stacks are the same as $L_\infty$-algebras with an invariant pairing. 
\end{enumerate}

From these observations it makes sense to define a perturbative classical field theory (perturbing around a given solution to the equations of motion) to be a sheaf of $L_\infty$-algebras with some sort of an invariant pairing, which we will define below. Moreover, we are interested in the situation where the equations of motion (or equivalently our moduli problem) are described by a system of elliptic partial differential equations, which lead to the following notion.
\begin{defn}
An {\em elliptic $L_\infty$-algebra $\CL$} on $M$ consists of
\begin{itemize}
\item a graded vector bundle $L$ on $M$, whose space of sections in $\CL$,
\item a differential operator $\d:\CL\to\CL$ of cohomological degree 1 and square 0, which makes $\CL$ into an elliptic complex,
\item a collection of polydifferential operators $l_n:\CL^{\otimes n}\to\CL$ which are alternating, of cohomological degree $2-n$, and which give $\CL$ the structure of an $L_\infty$-algebra.
\end{itemize}

An {\em invariant pairing of degree $k$} on an elliptic $L_\infty$-algebra $\CL$ is an isomorphism of $\CL$-modules
$$\CL\cong\CL^![-k],$$
which is symmetric, where  $\CL^!(U)=\Gamma(U, L^\vee\otimes Dens_M)$. 
\end{defn}
\begin{remark}
Note that the sheaf $\CL^!$ is homotopy equivalent to the continuous Verdier dual, which assigns to $U$ the linear dual of $\L_c(U)$.   
\end{remark}
Such an invariant pairing yields an invariant pairing on the space $\L_c(U)$ for every open $U$ in $M$.  The fact that the pairing on $\L_c(U)$ is invariant follows from the fact that the map $\CL \to \CL^![-k]$ is an isomorphism of $\CL$-modules.

From deformation theory, we know that there is an equivalence of $(\infty,1)$-categories between the category of differential graded Lie algebras and the category of formal pointed derived moduli problems (see \cite{Lur11,Hin01,KonSoi}). Here pointed means that we are deforming a given solution to the equations of motion. Thus, the following definitions make sense.

\begin{defn}
A {\em formal pointed elliptic moduli problem with a symplectic form of cohomological degree $k$ on $M$} is an elliptic $L_\infty$-algebra on $M$ with an invariant pairing of cohomological degree $k-2$.
\end{defn}
\begin{defn}
A {\em perturbative classical field theory} on $M$ is a formal pointed elliptic moduli problem on $M$ with a symplectic form of cohomological degree -1. The {\em space of fields $\E$} of a classical field theory arises as a shift of the $L_\infty$-algebra encoding the theory, $\E=\CL[1].$
\end{defn}

The field theories we consider in this text all arise as cotangent theories.
\begin{defn}
Let $\CL$ be an elliptic $L_{\infty}$-algebra on $M$ corresponding to a sheaf of formal moduli problems $\CM_\CL$ on $M$. Then the {\em cotangent field theory} associated to $\CL$ is the classical field theory $\CL\oplus\CL^![-3]$ (with its obvious pairing). Its moduli problem is denoted by $T^*[-1]\CM_{\CL}$.
\end{defn}

\section{Supersymmetry}

In supersymmetry, we have two gradings: one by $\Z/2\Z$ ({\em=fermionic grading}), and one by $\Z$ ({\em =cohomological grading, ``ghost number''}). So one extends the definitions from appendix \ref{appendixA} to this bi-graded (=super) setting.

In this super-setting, we want all algebraic structures to preserve the fermion degree and have the same cohomological degree as in the ordinary setting. Thus, the differential of a {\em super cochain complex} is of degree (0,1) and the structure maps of a {\em super $L_\infty$-algebra $L$}, $l_n: L^{\otimes n}\to L$, are of bidegree $(0,2-n)$, satisfying the same relations as in the ordinary case. The other notions from appendix \ref{appendixA} carry over similarly.

\begin{defn}
A {\em perturbative classical field theory with fermions} on $M$ is a super elliptic $L_\infty$-algebra $\CL$ on $M$ with an invariant pairing of bi-degree (0,-3), i.e.~of cohomological degree -3 and fermionic degree 0.
\end{defn}

\begin{defn}
A {\em formal pointed super elliptic moduli problem with a symplectic form of cohomological degree $k$ on $M$} is a super elliptic $L_\infty$-algebra on $M$ with an invariant pairing of bi-degree $(0,k-2)$.
\end{defn}

Now we can encode supersymmetry.
\begin{defn}
A {\em field theory on $\R^4$ with $\CN=k$ supersymmetries} is a $\Spin(4)\ltimes\R^4$-invariant super elliptic moduli problem $\CM$ defined over $\C$ with a symplectic form of cohomological degree -1; together with an extension of the action of the complexified Euclidean Lie algebra $\mathfrak{so}(4,\C)\ltimes V_{\C}$ to an action of the complexified super-Euclidean Lie algebra $\mathfrak{so}(4,\C)\ltimes T^{\CN=k}$.

Given any complex Lie subgroup $G\subseteq \GL(k,\C)$, we say that such a supersymmetric field theory has {\em $R$-symmetry group $G$} if the group $G$ acts on the theory in a way covering the trivial action on space-time $\R^4$, and compatible with the action of $G$ on $T^{\CN=k}$.
\end{defn}

\vfill
\pagebreak

\def\cprime{$'$}

\end{document}